\documentclass{article}
\usepackage{amsmath}
\usepackage{amsfonts}
\usepackage{amsthm}
\usepackage{amssymb}
\usepackage{enumerate}

\newtheorem{thm}{Theorem}
\newtheorem{prop}[thm]{Proposition}
\newtheorem{cor}[thm]{Corollary}
\newtheorem{lem}[thm]{Lemma}

\theoremstyle{definition}
\newtheorem{defn}[thm]{Definition}

\begin{document}

\title{Moments of Partitions and Derivatives of Higher Order}

\author{Shaul Zemel}

\maketitle

\begin{abstract}
We define moments of partitions of integers, and show that they appear in higher order derivatives of certain combinations of functions.
\end{abstract}

\section*{Introduction and Statement of the Main Result}

Changes of coordinates grew, through the history of mathematics, from a powerful computational tool to the underlying object behind the modern definition of many objects in various branches of mathematics, like differentiable manifolds or Riemann surfaces. With the change of coordinates, all the objects that depend on these coordinates change their form, and one would like to investigate their behavior. For functions of one variable, like holomorphic functions on Riemann surfaces, this is very easy, but one may ask what happens to the derivatives of functions under this operation. The answer is described by the well-known formula of Fa\`{a} di Bruno for the derivative of any order of a composite function. For the history of this formula, as well as a discussion of the relevant references, see \cite{[J]}.

For phrasing Fa\`{a} di Bruno's formula, we recall that a partition $\lambda$ of some integer $n$, denoted by $\lambda \vdash n$, is defined to be a finite sequence of positive integers, say $a_{l}$ with $1 \leq l \leq L$, written in decreasing order, whose sum is $n$. The number $L$ is called the \emph{length} of $\lambda$ and is denoted by $\ell(\lambda)$, and given a partition $\lambda$, the number $n$ for which $\lambda \vdash n$ is denoted by $|\lambda|$. Another method for representing partitions, which will be more useful for our purposes, is by the \emph{multiplicities} $m_{i}$ with $i\geq1$, which are defined by $m_{i}=\big|\;\{1 \leq l \leq L|a_{l}=i\}\big|$, with $m_{i}\geq0$ for every $i\geq1$ and such that only finitely many multiplicities are non-zero. In this case we have $|\lambda|=\sum_{i\geq1}im_{i}$ and $\ell(\lambda)=\sum_{i\geq1}m_{i}$. Note that the empty partition, in which all the multiplicities $m_{i}$ vanish, is allowed. It is considered to be partition of 0, with length 0.

Therefore when some partition $\lambda$ is known from the context, the numbers $m_{i}$ will denote the associated multiplicities, and in case several partitions are involved we may write $m_{i}(\lambda)$ for clarification. Assume that $f$ is a function of $z$ and the variable $z$ is a function of another variable $t$, say $z=\varphi(t)$, and we wish to differentiate the resulting function of $t$ successively. The formula of Fa\`{a} di Bruno is the answer to this question, which we can write explicitly as
\begin{equation}
(f\circ\varphi)^{(n)}(t)=\frac{d^{n}}{dt^{n}}\big((f(\varphi(t)\big)=\sum_{\lambda \vdash n}\frac{n!}{\prod_{i=1}^{n}(i!)^{m_{i}}m_{i}!}f^{(\ell(\lambda))}\big(\varphi(t)\big)\prod_{i=1}^{n}\big(\varphi^{(i)}(t)\big)^{m_{i}}. \label{FaadiBruno}
\end{equation}
We remark that gathering these formulae for all $n$ together, and noticing that $\lambda$ appears in the derivative of order $|\lambda|$, yields a structure of a Hopf algebra on the polynomial ring of infinitely many variables, graded appropriately---see, e.g., \cite{[FGV]}.

\smallskip

Equation \eqref{FaadiBruno} can be viewed as describing the behavior of derivatives of functions on 1-dimensional objects (like Riemann surfaces, when the variables are locally taken from $\mathbb{C}$) under changing the coordinate. However, functions are not the only type of forms that can be defined on 1-dimensional objects, and the next forms to consider are differentials, and more generally $q$-differentials. These are defined such that their coordinate changes also involve the $q$th power of the derivative of the coordinate change, namely if a $q$-differential is expressed in a coordinate neighborhood as $f(z)$ times the formal symbol $(dz)^{q}$, then when we change the coordinate via $z=\varphi(t)$ the description in the coordinate $t$ is $f\big(\varphi(t)\big)\varphi'(t)^{q}$ times $(dt)^{q}$ (see, e.g., Section III.4.12 of \cite{[FK]}). While simply differentiating such expressions may seem a bit unnatural, this operation does appear, for example, in the proof of Proposition III.5.10 of \cite{[FK]}, which states that if $d$ is the dimension of the space of $q$-differentials on a Riemann surface $X$ then the Wronskian of this space is an $m$-differential, where $m=\frac{d}{2}(d+2q-1)$. While the proof of the latter statement takes only the ``essential terms'' of this derivative, where no combinatorial calculations have to be carried out, it does leave open the question about the formula for the $n$th derivative of such a transformation rule, and whether some interesting combinatorial phenomena hide in it. The dependence on $q$ as a number becomes formal, and the expression that we investigate in this manner is the $n$th derivative of an expression like $f\big(\varphi(t)\big)g\big(\varphi'(t)\big)$, or just $(f\circ\varphi)\cdot(g\circ\varphi')$ when we omit the variable $t$. In fact, $g$ needs not be composed with the first derivative of $\varphi$, but can rather be composed with the derivative $\varphi^{(s)}$ of any order $s\geq0$.

\smallskip

The question that we tackle in this paper is therefore finding an explicit formula for the $n$th derivative of the expression $(f\circ\varphi)\cdot\big(g\circ\varphi^{(s)}\big)$, in terms of the derivatives of $f$, $g$, and $\varphi$. The fact that the formula, which is given in Equation \eqref{finres} below, involves partitions, is, of course, no big surprise. But in addition to the combinatorial coefficients appearing in Fa\`{a} di Bruno's formula from Equation \eqref{FaadiBruno}, the resulting coefficient involves some numbers that we call \emph{moments} of partitions. More precisely, given an integer $k\geq1$ and a partition $\lambda$, with the summands $a_{l}$, $1 \leq l\leq\ell(\lambda)$ and the multiplicities $m_{i}$, we define its \emph{$k$th moment} to be \[p_{k}(\lambda)=\sum_{l=1}^{\ell(\lambda)}a_{l}^{k}=\sum_{i\geq1}i^{k}m_{i}.\] In particular the first moment of $\lambda$ is just $|\lambda|$ by definition. The notation $p_{k}$ comes from the theory of symmetric functions, as this moment is the value attained by the $k$th power sum function on the numbers $a_{l}$, $1 \leq l\leq\ell(\lambda)$. However, there are several natural bases for the ring of symmetric functions, and in particular one can take the basis arising from the \emph{elementary symmetric functions} $\{e_{r}\}_{r=0}^{\infty}$, which appear, e.g., in the expressions for the coefficients of a polynomial in terms of its roots. We shall therefore denote by $e_{r}(\lambda)$ the \emph{$r$th elementary moment} of $\lambda$, which is obtained by substituting the $a_{l}$s into the $r$th elementary symmetric function $e_{r}$. Note that every symmetric function with index 0 is the constant 1, so that the 0th moment of every partition is 1 (even though the formula for $p_{k}$ above would give $\ell(\lambda)$ when $k=0$).

An interesting feature of the resulting formula is that for expressing the coefficient associated with $\lambda$, we first have to modify $\lambda$ in two different directions, and take the elementary moments of this modification. More explicitly, given an integer $s\geq0$ and a partition $\lambda$, we shall denote by $\lambda^{>s}$ the \emph{$s$th truncation} of $\lambda$, which is obtained by eliminating any number $a_{l}$ which satisfies $a_{l} \leq s$, or equivalently by setting each $m_{i}$ with $i \leq s$ to 0 and leaving the multiplicities $m_{i}$ with $i>s$ at their value $m_{i}(\lambda)$. Note that this operation may transform some non-trivial partitions into the trivial one, all the moments of positive indices of which vanish by definition. In addition, for every partition $\mu$ and integer $s\geq0$ we denote by $(\mu)_{s}$ the partition obtained by replacing each number $a_{l}$ by its \emph{Pochhammer symbol} $(a_{l})_{s}=\prod_{\upsilon=0}^{s-1}(a_{l}-\upsilon)=\frac{a_{l}!}{(a_{l}-s)!}$ (the latter equality holding also when $0 \leq a_{l}<s$, since then the numerator is finite and the denominator is infinite, but we shall use it for $\mu=\lambda^{>s}$ where no such indices appear). Using this notation, our main result states that the $n$th derivative of $(f\circ\varphi)\cdot\big(g\circ\varphi^{(s)}\big)$ is given by
\begin{equation}
\sum_{r=0}^{n}\sum_{\substack{\lambda \vdash n+rs \\ \ell(\lambda^{>s}) \geq r}}\frac{n!e_{r}\big((\lambda^{>s})_{s}\big)}{\prod_{i=1}^{n}(i!)^{m_{i}}m_{i}!}\big(f^{(\ell(\lambda)-r)}\circ\varphi\big)\big(g^{(r)}\circ\varphi^{(s)}\big)\prod_{i=1}^{n}\big(\varphi^{(i)}\big)^{m_{i}}, \label{finres}
\end{equation}
where $m_{i}=m_{i}(\lambda)$ are the multiplicities associated with the partition $\lambda$. An immediate corollary is that the coefficients from Equation \eqref{finres} are integers, a fact that is much less obvious than the integrality of the coefficients from Equation \eqref{FaadiBruno}, which have combinatorial interpretations (these are also the coefficients appearing in the summands with $r=0$ in Equation \eqref{finres}).

\smallskip

It is well-known (see, e.g., Subsection 3.2.5 of \cite{[MS]}) that the formula of Fa\`{a} di Bruno from Equation \eqref{FaadiBruno} is related to certain multi-variate polynomials, called the \emph{Bell polynomials}, and appropriate substitutions in them produce the Touchard polynomials, the coefficients of which are the Stirling numbers of the second kind. We therefore define modified Bell polynomials using Equation \eqref{finres}, and establish a few of their properties as well as of those of the associated modified Stirling numbers of the second kind.

\smallskip

The rest of the paper is divided into 4 sections. Section \ref{FdBSec} presents a (well-known) proof of the formula of Fa\`{a} di Bruno's from Equation \eqref{FaadiBruno}, the ideas of which will be later used for proving the main result. Section \ref{Propofer} establishes some properties of the elementary symmetric functions that we shall need, and then Section \ref{Main} proves Equation \eqref{finres} and deduces some consequences. Finally, Section \ref{Bell} defines the modified Bell polynomials and the modified Stirling numbers of the second kind and proves their properties.

\section{A Proof of Fa\`{a} di Bruno's Formula \label{FdBSec}}

We will prove our main result by induction on $n$, like one of the many proofs of Fa\`{a} di Bruno's formula, which we shall give here in Proposition \ref{FdBprop}. The reason we include this proof is for introducing the tools that we shall use for the main result below.

\smallskip

First we introduce a notation that will help us avoid undefined terms. Recall the Kronecker's $\delta$-symbol $\delta_{x,y}$, which is defined to be 1 when $x=y$ and 0 otherwise. Following our previous paper \cite{[Z]}, we shall use the complementary symbol $\overline{\delta}_{x,y}=1-\delta_{x,y}$, which equals 0 when $x=y$ and 1 otherwise. In addition, we make the following definition of partitions.
\begin{defn}
Let an integer $j\geq1$ and a partition $\lambda$ be given, and assume that $m_{j}(\lambda)\geq1$. We denote by $\lambda-\varepsilon_{j}$ the partition obtained by omitting one of the instances of $j$ (i.e., by deleting one of the numbers $a_{l}$ which equals $j$, and re-indexing). We also write $\lambda_{j}$ for the partition obtained by subtracting 1 from one of the numbers $a_{l}$ that equal $j$, and then deleting trivial terms and again re-indexing in decreasing order. \label{lambdajdef}
\end{defn}

The partitions from Definition \ref{lambdajdef} have the parameters given in the following simple lemma.
\begin{lem}
For the partition $\lambda-\varepsilon_{j}$ we have
\[|\lambda-\varepsilon_{j}|=|\lambda|-j,\ \ell(\lambda-\varepsilon_{j})=\ell(\lambda)-1,\mathrm{\ and\ }m_{i}(\lambda-\varepsilon_{j})=m_{i}-\delta_{i,j}\mathrm{\ for\ }i\geq1.\] On the other hand, for $\lambda_{j}$ we get \[|\lambda_{j}|=|\lambda|-1,\ \ell(\lambda_{j})=\ell(\lambda)-\delta_{j,1},\mathrm{\ and\ }m_{i}(\lambda_{j})=\begin{cases}m_{i}(\lambda)-1 & i=j \\ m_{i}(\lambda)+1 & i=j-1\geq1 \\ m_{i}(\lambda) & \mathrm{otherwise}. \end{cases}\] \label{lambdaj}
\end{lem}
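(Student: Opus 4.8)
The plan is to verify each of the two sets of claims directly from the multiplicity description of partitions, since both $\lambda-\varepsilon_j$ and $\lambda_j$ are defined by prescribed modifications of the multiplicity sequence $(m_i)_{i\geq 1}$, and all three invariants in question---the size $|\cdot|$, the length $\ell(\cdot)$, and the individual multiplicities $m_i(\cdot)$---are simple functions of that sequence via $|\lambda|=\sum_{i\geq 1} i\,m_i$ and $\ell(\lambda)=\sum_{i\geq 1} m_i$. So the strategy is to read off the new multiplicities from the combinatorial description in Definition \ref{lambdajdef}, then feed them into these two sum formulas.

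For $\lambda-\varepsilon_j$, deleting one instance of the part $j$ removes exactly one copy of $j$ from the multiset, so $m_j$ drops by $1$ and no other multiplicity changes; this is precisely $m_i(\lambda-\varepsilon_j)=m_i-\delta_{i,j}$. First I would establish this multiplicity identity, and then the other two invariants follow immediately by summation: $\ell(\lambda-\varepsilon_j)=\sum_i(m_i-\delta_{i,j})=\ell(\lambda)-1$, and $|\lambda-\varepsilon_j|=\sum_i i(m_i-\delta_{i,j})=|\lambda|-j$. This half is entirely routine and requires no case analysis.

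For $\lambda_j$, the operation replaces one part equal to $j$ by a part equal to $j-1$. Here I would first handle the multiplicity bookkeeping: removing one $j$ decreases $m_j$ by $1$, and the resulting part $j-1$ increases $m_{j-1}$ by $1$ when $j-1\geq 1$. This gives the three-case formula for $m_i(\lambda_j)$ as stated. The size then satisfies $|\lambda_j|=\sum_i i\,m_i(\lambda_j)=|\lambda|-j+(j-1)=|\lambda|-1$, which is clean. The one genuinely delicate point, and the place where I expect the only real subtlety to live, is the length computation and the boundary case $j=1$. When $j\geq 2$, the new part $j-1$ is still positive, so the number of parts is unchanged and $\ell(\lambda_j)=\ell(\lambda)$. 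When $j=1$, subtracting $1$ produces a $0$, which is discarded as a trivial term per the definition, so the length drops by $1$; the factor $\delta_{j,1}$ in $\ell(\lambda_j)=\ell(\lambda)-\delta_{j,1}$ encodes exactly this. I would therefore treat the two cases $j=1$ and $j\geq 2$ separately, checking that the multiplicity formula (which for $j=1$ involves no $m_{j-1}=m_0$ term, since the condition $j-1\geq 1$ fails) is consistent with each, and confirm that $\sum_i m_i(\lambda_j)$ reproduces $\ell(\lambda)-\delta_{j,1}$ in both. The main obstacle, such as it is, is merely ensuring this $j=1$ edge case is handled without ambiguity; everything else is a direct substitution into the two summation formulas.
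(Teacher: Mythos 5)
Your proof is correct, and it is exactly the routine multiplicity bookkeeping the paper has in mind (the paper states this lemma without proof, treating it as immediate from Definition \ref{lambdajdef}); your careful handling of the $j=1$ case, where the new part $0$ is discarded and the convention $m_{0}=0$ keeps the formula consistent, is the only point of substance and you treat it correctly.
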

Note that when $j=1$ we have $\lambda-\varepsilon_{1}=\lambda_{1}$ in Definition \ref{lambdajdef}, and the two lines from Lemma \ref{lambdaj} coincide. In addition, we shall make the convention, which is appropriate by our definition, that $m_{0}(\lambda)=0$ for every partition $\lambda$ (this is why we wrote $i=j-1\geq1$ in the second case in Lemma \ref{lambdaj}---we do not want $m_{0}(\lambda_{1})$ to be 1). This will be convenient for many statements below.

\smallskip

We begin with establishing the combinatorial identity behind one of the proofs of Fa\`{a} di Bruno's formula. While this proof is well-known, it contains the ideas that will be used later for proving the main result as well. We denote the combinatorial coefficient from Equation \eqref{finres} by $C_{\lambda,r}^{(s)}$, so that the one from Equation \eqref{FaadiBruno} is $C_{\lambda,0}^{(s)}$, regardless of the value of $s$.
\begin{lem}
If $\lambda$ is any partition such that $|\lambda|>0$, with multiplicities $\{m_{i}\}_{i\geq1}$, then the coefficient $C_{\lambda,0}^{(s)}$ can be written as $\sum_{j\geq1}(m_{j-1}+1)\overline{\delta}_{m_{j},0}C_{\lambda_{j},0}^{(s)}$. \label{recr0}
\end{lem}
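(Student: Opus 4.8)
The plan is to reduce the claim to an explicit cancellation among the Fa\`{a} di Bruno coefficients, exploiting the fact that the index $r=0$ trivializes the elementary-moment factor. Since $e_{0}$ is the constant symmetric function $1$, we have $C_{\lambda,0}^{(s)}=\frac{n!}{\prod_{i\geq1}(i!)^{m_{i}}m_{i}!}$ with $n=|\lambda|$, independently of $s$; this is exactly the coefficient from Equation \eqref{FaadiBruno}. Thus the asserted identity is a purely combinatorial relation among these coefficients, and the factor $\overline{\delta}_{m_{j},0}$ merely restricts the sum to those $j$ with $m_{j}\geq1$, i.e. to those $j$ for which $\lambda_{j}$ is defined in Definition \ref{lambdajdef}.

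First I would express $C_{\lambda_{j},0}^{(s)}$ as an explicit multiple of $C_{\lambda,0}^{(s)}$ using the parameters of $\lambda_{j}$ recorded in Lemma \ref{lambdaj}. Since $|\lambda_{j}|=|\lambda|-1=n-1$, the numerator becomes $(n-1)!$, while the denominator $\prod_{i\geq1}(i!)^{m_{i}(\lambda_{j})}m_{i}(\lambda_{j})!$ differs from that of $\lambda$ only in the factors at $i=j$ and $i=j-1$: the factor at $i=j$ is divided by $j!\,m_{j}$ (as $m_{j}$ drops by one), while the factor at $i=j-1$ is multiplied by $(j-1)!\,(m_{j-1}+1)$ (as $m_{j-1}$ rises by one). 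This yields
\[
C_{\lambda_{j},0}^{(s)}=\frac{(n-1)!}{\prod_{i\geq1}(i!)^{m_{i}}m_{i}!}\cdot\frac{j!\,m_{j}}{(j-1)!\,(m_{j-1}+1)}=\frac{(n-1)!}{\prod_{i\geq1}(i!)^{m_{i}}m_{i}!}\cdot\frac{j\,m_{j}}{m_{j-1}+1}.
\]

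With this in hand the identity falls out by direct cancellation: multiplying by $(m_{j-1}+1)$ removes the denominator $m_{j-1}+1$, leaving $\frac{(n-1)!}{\prod_{i\geq1}(i!)^{m_{i}}m_{i}!}\,j\,m_{j}$ for each $j$ with $m_{j}\geq1$. Summing over all such $j$ and invoking $\sum_{j\geq1}j\,m_{j}=|\lambda|=n$ then recovers $\frac{n!}{\prod_{i\geq1}(i!)^{m_{i}}m_{i}!}=C_{\lambda,0}^{(s)}$, which is the desired equality.

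The only delicate point is the boundary case $j=1$, where $\lambda_{1}=\lambda-\varepsilon_{1}$ simply deletes a part equal to $1$ and there is no index $i=j-1=0$ present in the product. I expect the convention $m_{0}(\lambda)=0$ to absorb this case without extra work: then $(m_{j-1}+1)=m_{0}+1=1$ and $(j-1)!=0!=1$, so the general formula for $C_{\lambda_{j},0}^{(s)}$ already specializes correctly at $j=1$ to $\frac{(n-1)!}{\prod_{i\geq1}(i!)^{m_{i}}m_{i}!}\cdot m_{1}$. This bookkeeping is the main (and quite mild) obstacle; once it is checked, everything else is a routine manipulation of factorials.
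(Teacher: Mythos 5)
Your proof is correct and follows essentially the same route as the paper's: both use Lemma \ref{lambdaj} to compute the ratio $C_{\lambda_{j},0}^{(s)}/C_{\lambda,0}^{(s)}=\frac{jm_{j}}{n(m_{j-1}+1)}$ and then conclude via $\sum_{j\geq1}jm_{j}=|\lambda|$. Your handling of the $j=1$ boundary case through the convention $m_{0}(\lambda)=0$ matches the paper's treatment as well.
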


\begin{proof}
Lemma \ref{lambdaj} shows that for every $j\geq1$ with $m_{j}\geq1$ the denominator of $C_{\lambda_{j},0}^{(s)}$ is the same as that of $C_{\lambda,0}^{(s)}$, except that $j!$ now appears only to the power $m_{j}-1$ and we have $(m_{j}-1)!$ instead of $m_{j}!$, and if $j\geq2$ then $(j-1)!$ comes with the power $m_{j-1}+1$ and the denominator also contains $(m_{j-1}+1)!$. The multiplier $m_{j-1}+1$ is trivial when $j=1$ and cancels the latter factorial to $m_{j-1}!$ otherwise, and after we multiply the numerator and denominator by $jm_{j}$, we also get the required denominator $m_{j}!$, and the powers of $j!$, as well as of $(j-1)!=\frac{j!}{j}$ when $j\geq2$, become the correct ones as well. This shows that after multiplying by the denominator of the left hand side, the right hand side becomes \[\sum_{j\geq1}|\lambda_{j}|!jm_{j}\overline{\delta}_{m_{j},0}=(|\lambda|-1)!\sum_{j\geq1}jm_{j}=(|\lambda|-1)!\cdot|\lambda|=|\lambda|!,\] where the expression on the left is obtained via Lemma \ref{lambdaj} again, the first equality is based on the fact that $\overline{\delta}_{m_{j},0}$ vanishes only when the multiplier $m_{j}$ vanishes and can therefore be ignored, and we then use the definition of $|\lambda|$. As this is the numerator on the left hand side as well, this proves the lemma.
\end{proof}
The details of the proof of Lemma \ref{recr0} will be useful for the proof of the main result. As the latter proof will work inductively, we provide the full proof of Equation \eqref{FaadiBruno}, since we shall use these arguments as well.
\begin{prop}
The formula from Equation \eqref{FaadiBruno} is valid for every $n$, i.e., for any $n\geq0$ we have
\[(f\circ\varphi)^{(n)}=\sum_{\lambda \vdash n}C_{\lambda,0}^{(s)}\big(f^{(\ell(\lambda))}\circ\varphi\big)\prod_{i=1}^{n}\big(\varphi^{(i)}\big)^{m_{i}}.\] \label{FdBprop}
\end{prop}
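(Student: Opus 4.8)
The plan is to prove Equation \eqref{FaadiBruno} by induction on $n$, using the recursion established in Lemma \ref{recr0} to match coefficients. The base case $n=0$ is immediate: the empty partition is the only partition of $0$, it has length $0$ and all multiplicities zero, so the right-hand side reduces to $f\circ\varphi$, which equals $(f\circ\varphi)^{(0)}$. For the inductive step, I would assume the formula holds for $n-1$ and differentiate both sides of the expression for $(f\circ\varphi)^{(n-1)}$ once more with respect to $t$.

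The heart of the argument is to compute $\frac{d}{dt}$ of a generic summand $C_{\mu,0}^{(s)}\big(f^{(\ell(\mu))}\circ\varphi\big)\prod_{i\geq1}(\varphi^{(i)})^{m_{i}(\mu)}$ for $\mu\vdash n-1$, and then to collect the resulting terms according to which partition $\lambda\vdash n$ they contribute to. By the product rule, differentiating produces two kinds of terms. Differentiating the factor $f^{(\ell(\mu))}\circ\varphi$ yields $\big(f^{(\ell(\mu)+1)}\circ\varphi\big)\varphi'$ times the same product of derivatives; this raises $m_{1}$ by one and raises the order of $f$ by one, so it corresponds to adjoining a new part equal to $1$ to $\mu$. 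Differentiating one factor $\varphi^{(i)}$ in the product (there are $m_{i}(\mu)$ of them) replaces one power of $\varphi^{(i)}$ by $\varphi^{(i+1)}$, which in the partition language is exactly the operation that produces a partition $\lambda$ with $\lambda_{i+1}=\mu$, i.e. $\mu$ arises from $\lambda$ by the operation of Definition \ref{lambdajdef} applied with $j=i+1$. In both cases the order $\ell$ of the derivative of $f$ is preserved in the sense required, since $\ell(\lambda)=\ell(\mu)$ when $j\geq 2$ and $\ell(\lambda)=\ell(\mu)+1$ when $j=1$, matching Lemma \ref{lambdaj}.

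Having organized the terms this way, I would fix a target partition $\lambda\vdash n$ and gather every contribution landing on it. A summand $f^{(\ell(\lambda))}\circ\varphi$ times $\prod_{i}(\varphi^{(i)})^{m_{i}(\lambda)}$ receives exactly one contribution from each index $j\geq1$ with $m_{j}(\lambda)\geq1$, namely from the partition $\mu=\lambda_{j}$ of $n-1$, and the combinatorial weight picked up is the factor $m_{j-1}(\lambda)+1$ coming from the number of factors $\varphi^{(j-1)}$ available to differentiate in $\prod(\varphi^{(i)})^{m_{i}(\lambda_{j})}$ (where the convention $m_{0}=0$ handles the $f$-differentiation case $j=1$ uniformly). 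Thus the total coefficient of the $\lambda$-summand is $\sum_{j\geq1}(m_{j-1}(\lambda)+1)\overline{\delta}_{m_{j}(\lambda),0}C_{\lambda_{j},0}^{(s)}$, which is precisely the expression that Lemma \ref{recr0} identifies with $C_{\lambda,0}^{(s)}$. This completes the induction.

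The main obstacle is the bookkeeping in the collection step: one must check that the map sending $(\mu,\text{choice of factor differentiated})$ to the target partition $\lambda$ is exactly the inverse of the operation $\lambda\mapsto\lambda_{j}$, so that no contributions are double-counted or missed, and that the multiplicity $m_{j-1}(\lambda)+1$ correctly records the number of indistinguishable factors $\varphi^{(j-1)}$ present \emph{after} the differentiation (equivalently, the number of ways Leibniz's rule selects a $\varphi^{(j-1)}$ to raise to $\varphi^{(j)}$). Once this correspondence is verified against Lemma \ref{lambdaj}, the algebraic identity is supplied verbatim by Lemma \ref{recr0}, so no further combinatorial computation is needed.
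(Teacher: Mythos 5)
Your proposal is correct and follows essentially the same route as the paper's own proof: induction on $n$, differentiating the $(n-1)$st formula, classifying the two kinds of Leibniz terms as the inverse of the operation $\lambda\mapsto\lambda_{j}$ from Definition \ref{lambdajdef}, picking up the multiplicity $m_{j-1}(\lambda)+1=m_{j-1}(\lambda_{j})$ via Lemma \ref{lambdaj}, and concluding with the identity of Lemma \ref{recr0}. No substantive differences.
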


\begin{proof}
We argue by induction on $n$, where the case with $n=0$, consisting only of the empty partition with $C_{\lambda}=1$, is a tautology. Assuming that $n>0$ and that the result holds for $n-1$, we have to differentiate the result for $n-1$ with respect to $t$ and show that it gives the asserted expression. Now, for any $\mu \vdash n-1$ we can first differentiate $f^{(\ell(\mu))}\circ\varphi$ and get $f^{(\ell(\mu)+1)}\circ\varphi$ times $\varphi'$, which (up to the coefficient) corresponds to the partition of $n$ obtained by adding another number $a_{\ell(\mu)+1}=1$ to $\mu$, and we write $j=1$ in this case. We can also differentiate one of the other multipliers, which we write as $\varphi^{(j-1)}$ for some $j\geq2$, and render it $\varphi^{(j)}$, yielding (again up to the coefficient) a term like the one associated with the partition of $n$ in which one of the $a_{l}$s of $\mu$ which equal $j-1$ was increased to $j$ (there are $m_{j-1}(\mu)$ such numbers, and indeed this operation comes with multiplicity $m_{j-1}(\mu)$ because of the power to which $\varphi^{(j-1)}$ appears in the expression from the induction hypothesis). In any case the resulting partition $\lambda$ satisfies $m_{j}(\lambda)\geq1$, and the contributing partition $\mu$ is $\lambda_{j}$ from Definition \ref{lambdajdef} (we write $\lambda_{1}$ also for $\lambda-\varepsilon_{1}$ in the case with $j=1$). Hence we indeed obtain a sum over $\lambda \vdash n$ of the required expressions, and we need to verify the coefficients. But given such $\lambda$, we get contributions exactly from those $\lambda_{j}$ for which $m_{j}(\lambda)\geq1$, and Lemma \ref{lambdaj} shows that the contribution to the coefficient is $C_{\lambda_{j},0}^{(s)}$ times $m_{j-1}(\lambda)+1$ (also for $j=1$, where the latter multiplier is indeed 1). Therefore the resulting coefficient is the one from the right hand side of Lemma \ref{recr0}, which is the desired one by this lemma. This proves the proposition.
\end{proof}

\section{Some Properties of the Functions $e_{r}$ \label{Propofer}}

The coefficients $C_{\lambda,r}^{(s)}$ do depend on $s$, and also involve the non-trivial elementary symmetric functions $\{e_{r}\}_{r\geq1}$ (the function $e_{0}$ appearing in $C_{\lambda,0}^{(s)}$ is just 1). We now present some of their properties that we shall need. Most of the material can be found in many places in the literature, e.g., Section 2 in Chapter 1 of \cite{[M]}, but we include it here for completeness and since it is short and simple.

\smallskip

We first consider what happens to the elementary symmetric function $e_{r}$, of the (finitely many) numbers $b_{l}$ with $1 \leq l \leq L$, say, when we subtract some number $c$ from one particular number $b_{l}$. This includes, as a special case, the formula for omitting $b_{l}$, where we simply take $c=b_{l}$.
\begin{lem}
For any $r\geq0$, write $e_{r}$ for $e_{r}(b_{1},\ldots,b_{L})$ for some numbers $b_{l}$, $1 \leq l \leq L$. Then replacing $b_{l}$ for one index $l$ by $b_{l}-c$ sends $e_{r}$ to the expression $e_{r}-c\sum_{k=1}^{r}(-b_{l})^{k-1}e_{r-k}$, which for $c=b_{l}$ becomes $\sum_{k=0}^{r}(-b_{l})^{k}e_{r-k}$. \label{ertrans}
\end{lem}

\begin{proof}
The formula expressing the coefficients of a polynomial using its roots transforms, by a simple operation, to the equality
\begin{equation}
\sum_{r=0}^{L}e_{r}X^{r}=\prod_{l=1}^{L}(1+b_{l}X). \label{prodfore}
\end{equation}
Our operation replaces $b_{l}$ by $b_{l}-c$, so that we need the coefficients of the power series in $X$ obtained by multiplying by $\frac{1+(b_{l}-c)X}{1+b_{l}X}$. Since after expanding the denominator geometrically this multiplier becomes $1-c\sum_{k=1}^{\infty}(-b_{l})^{k-1}X^{k}$, multiplying by the left hand side gives the series with the asserted coefficients. The case with $c=b_{l}$ is now immediate. This proves the lemma.
\end{proof}

Another well-known identity that we shall need is the following one.
\begin{lem}
With $e_{r}$ as in Lemma \ref{ertrans}, and with $p_{k}$ defined to be $p_{k}(b_{1},\ldots,b_{L})$ as well, the sum $\sum_{k=1}^{r}(-1)^{k-1}p_{k}e_{r-k}$ equals $re_{r}$ for every $r\geq1$. \label{epiden}
\end{lem}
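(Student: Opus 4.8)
The plan is to prove this Newton-type identity by a generating-function argument, building on Equation \eqref{prodfore} from the proof of Lemma \ref{ertrans}. Writing $E(X)=\sum_{r\geq0}e_{r}X^{r}=\prod_{l=1}^{L}(1+b_{l}X)$, I would take the logarithmic derivative. Since $E(0)=1$, the quotient $E'(X)/E(X)$ is a well-defined formal power series (equivalently, a rational function regular at $X=0$), and applying the product rule to $\prod_{l}(1+b_{l}X)$ gives $E'(X)/E(X)=\sum_{l=1}^{L}\frac{b_{l}}{1+b_{l}X}$.

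Next I would expand each summand geometrically as $\frac{b_{l}}{1+b_{l}X}=\sum_{k\geq1}(-1)^{k-1}b_{l}^{k}X^{k-1}$; summing over $l$ and using $p_{k}=\sum_{l}b_{l}^{k}$ yields $E'(X)/E(X)=\sum_{k\geq1}(-1)^{k-1}p_{k}X^{k-1}$. Clearing the denominator produces the power-series identity $E'(X)=E(X)\cdot\sum_{k\geq1}(-1)^{k-1}p_{k}X^{k-1}$. I would then extract the coefficient of $X^{r-1}$ from both sides: on the left, $E'(X)=\sum_{r\geq1}re_{r}X^{r-1}$ contributes $re_{r}$, while on the right the Cauchy product takes $e_{r-k}$ from $E$ against $(-1)^{k-1}p_{k}X^{k-1}$ for $1\leq k\leq r$, giving $\sum_{k=1}^{r}(-1)^{k-1}p_{k}e_{r-k}$. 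Equating the two coefficients proves the lemma.

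I expect no serious obstacle; the only points needing care are the signs, which differ from the usual textbook convention because Equation \eqref{prodfore} uses $1+b_{l}X$ rather than $1-b_{l}X$, and the justification that the logarithmic-derivative manipulation is legitimate, which holds since $E$ is a polynomial with constant term $1$.

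As an alternative avoiding generating functions, I could argue directly from Lemma \ref{ertrans}. Letting $e_{r}^{(l)}$ denote the $r$th elementary function of the $b$'s with $b_{l}$ omitted, the $c=b_{l}$ case of that lemma gives $e_{r}^{(l)}=\sum_{k=0}^{r}(-1)^{k}b_{l}^{k}e_{r-k}$. Summing over $l$, the $k=0$ part contributes $Le_{r}$, while the combinatorial identity $\sum_{l}e_{r}^{(l)}=(L-r)e_{r}$ (each squarefree degree-$r$ monomial survives the omission of $b_{l}$ for exactly $L-r$ indices $l$) evaluates the left-hand side; comparing the two and cancelling the $Le_{r}$ terms again isolates $re_{r}$.
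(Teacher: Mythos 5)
Your main argument is correct and is essentially the paper's own proof: both take the logarithmic derivative of the generating identity $\sum_{r}e_{r}X^{r}=\prod_{l}(1+b_{l}X)$, identify the result with $\sum_{k\geq1}(-1)^{k-1}p_{k}X^{k-1}$, clear the denominator, and compare coefficients of $X^{r-1}$ (the paper expands $\log(1+b_{l}X)$ first and then differentiates, while you differentiate first and expand $b_{l}/(1+b_{l}X)$ geometrically, which is the same computation). Your alternative argument via Lemma \ref{ertrans} with $c=b_{l}$ and the count $\sum_{l}e_{r}^{(l)}=(L-r)e_{r}$ is also valid and is a nice purely combinatorial route the paper does not take.
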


\begin{proof}
Take the logarithm of Equation \eqref{prodfore}, and substitute the series for $\log(1-z)$ in the right hand side. The right hand side then becomes $\sum_{k=1}^{\infty}(-1)^{k-1}\frac{p_{k}}{k}X^{k}$, and after differentiating we get $\sum_{k=1}^{\infty}(-1)^{k-1}p_{k}X^{k-1}$. But differentiating the logarithm of the left hand side gives the quotient between $\sum_{r\geq1}re_{r}X^{r-1}$ and $\sum_{r\geq0}e_{r}X^{r}$, and after we multiply by the denominator, the result follows by comparing the coefficient of $X^{r-1}$ on both sides. This proves the lemma.
\end{proof}
We remark that the result of Lemma \ref{epiden} is trivially true also for $r=0$, but we shall need it only for $r\geq1$.

\smallskip

We can now establish the extension of Lemma \ref{recr0} to $r\geq1$.
\begin{prop}
For every $s\geq0$, $r\geq0$, and partition $\lambda$, we have the equality \[C_{\lambda,r}^{(s)}=\sum_{j\geq1}(m_{j-1}+1)\overline{\delta}_{m_{j},0}C_{\lambda_{j},r}^{(s)}+\overline{\delta}_{r,0}\overline{\delta}_{m_{s+1},0}C_{\lambda-\varepsilon_{s+1},r-1}^{(s)}.\] \label{recrpos}
\end{prop}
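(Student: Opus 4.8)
The plan is to split $C_{\lambda,r}^{(s)}$ into its combinatorial part and its elementary-symmetric part, writing $C_{\lambda,r}^{(s)}=F(\lambda,r)\,e_{r}\big((\lambda^{>s})_{s}\big)$ with $F(\lambda,r)=\frac{(|\lambda|-rs)!}{\prod_{i\geq1}(i!)^{m_{i}}m_{i}!}$, so that $F(\lambda,0)=C_{\lambda,0}^{(s)}$ is the Fa\`{a} di Bruno coefficient. Working in the range $|\lambda|>rs$, which is the one relevant for the induction of the main result and in which every factorial occurring is defined (for $r=0$ this is the range $|\lambda|>0$ of Lemma \ref{recr0}), I would first compute, using Lemma \ref{lambdaj}, the two ratios
\[\frac{F(\lambda_{j},r)}{F(\lambda,r)}=\frac{1}{|\lambda|-rs}\cdot\frac{jm_{j}}{m_{j-1}+1}\quad\text{and}\quad\frac{F(\lambda-\varepsilon_{s+1},r-1)}{F(\lambda,r)}=\frac{(s+1)!\,m_{s+1}}{|\lambda|-rs}.\]
The first is exactly the manipulation already carried out in the proof of Lemma \ref{recr0}, the value of $r$ only shifting the factorial argument by the common quantity $rs$; the second uses in addition the identity $|\lambda-\varepsilon_{s+1}|-(r-1)s=|\lambda|-1-rs$, which matches the factorial argument appearing in the $\lambda_{j}$ terms. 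Substituting these, cancelling $F(\lambda,r)$, and absorbing each $\overline{\delta}_{m_{j},0}$ into the multiplier $m_{j}$ reduces the proposition to the purely symmetric-function identity
\[(|\lambda|-rs)\,e_{r}=\sum_{j\geq1}jm_{j}\,e_{r}\big((\lambda_{j}^{>s})_{s}\big)+\overline{\delta}_{r,0}\,(s+1)!\,m_{s+1}\,e_{r-1}\big(((\lambda-\varepsilon_{s+1})^{>s})_{s}\big),\]
where here and below $e_{k}:=e_{k}\big((\lambda^{>s})_{s}\big)$ and $p_{k}:=p_{k}\big((\lambda^{>s})_{s}\big)$ denote the elementary and power-sum moments of the fixed list $(\lambda^{>s})_{s}$.

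Next I would describe how the $s$th truncation and the Pochhammer operation interact with the passage from $\lambda$ to $\lambda_{j}$, which breaks the sum into three regimes. For $j\leq s$ the two modified parts $j$ and $j-1$ are both discarded by the truncation, so $(\lambda_{j}^{>s})_{s}=(\lambda^{>s})_{s}$ and $e_{r}$ is unchanged; for $j=s+1$ the part $s+1$ is lowered to $s$ and then truncated away, so $(\lambda_{s+1}^{>s})_{s}$ is $(\lambda^{>s})_{s}$ with one entry $(s+1)_{s}=(s+1)!$ deleted; and for $j\geq s+2$ the list $(\lambda_{j}^{>s})_{s}$ arises from $(\lambda^{>s})_{s}$ by replacing one entry $(j)_{s}$ with $(j-1)_{s}$. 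Lemma \ref{ertrans}, in its deletion form for $j=s+1$ and its replacement form for $j\geq s+2$, then expresses each $e_{r}\big((\lambda_{j}^{>s})_{s}\big)$ as a combination of $e_{r},e_{r-1},\dots,e_{0}$, and I would collect the coefficient of each $e_{r-k}$ in the first sum. The coefficient of $e_{r}$ itself is $\sum_{j\geq1}jm_{j}=|\lambda|$, so that the two sides of the reduced identity already agree in their $e_{r}$-terms up to the discrepancy $-rs\,e_{r}$, which the lower-order terms must account for.

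The heart of the proof, and the step I expect to be the main obstacle, is the evaluation of these lower-order terms. The decisive computation is the Pochhammer difference $(j)_{s}-(j-1)_{s}=s\,(j-1)_{s-1}=\tfrac{s}{j}(j)_{s}$, which causes the $j\geq s+2$ contribution to the coefficient of $e_{r-k}$ to collapse to $(-1)^{k}s\,m_{j}(j)_{s}^{k}$. The subtle point is the boundary index $j=s+1$: there Lemma \ref{ertrans} yields the deletion expansion $\sum_{k}\big(-(s+1)!\big)^{k}e_{r-k}$ rather than a replacement one, and it is precisely the extra summand $\overline{\delta}_{r,0}\,(s+1)!\,m_{s+1}\,e_{r-1}(\cdots)$ that repairs this into the same uniform shape $(-1)^{k}s\,m_{s+1}\big((s+1)!\big)^{k}$. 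Summing over all $j>s$ then identifies the coefficient of $e_{r-k}$ with $(-1)^{k}s\,p_{k}$, so that the entire lower-order contribution equals $-s\sum_{k=1}^{r}(-1)^{k-1}p_{k}e_{r-k}$. Finally Lemma \ref{epiden} evaluates this sum as $-s\cdot re_{r}=-rs\,e_{r}$, exactly the discrepancy found above, which completes the verification; for $r=0$ the factor $\overline{\delta}_{r,0}$ kills the extra term and the identity collapses to $\sum_{j}jm_{j}=|\lambda|$, recovering Lemma \ref{recr0}.
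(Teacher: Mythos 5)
Your proof is correct and takes essentially the same route as the paper's: the ratio computation reduces the proposition to the symmetric-function identity in Equation \eqref{sumnum}, the three regimes $j\leq s$, $j=s+1$, $j\geq s+2$ are treated via Lemma \ref{ertrans} exactly as in the paper, including the merging of the $j=s+1$ deletion expansion with the extra $e_{r-1}$ term into the uniform shape $(-1)^{k}s\,m_{j}(j)_{s}^{k}$, and Lemma \ref{epiden} closes the argument. Nothing is missing.
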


\begin{proof}
The case $r=0$ is simply the equality from Lemma \ref{recr0} (because $\overline{\delta}_{r,0}$ vanishes), so that we may assume $r\geq1$. The same argument from the proof of that lemma, and the fact that the numerators in all the terms on the right hand side involve $(n-1)!$ (by Lemma \ref{lambdaj}), reduce us to proving the equality
\begin{equation}
\sum_{j\geq1}jm_{j}e_{r}\big((\lambda_{j}^{>s})_{s}\big)+(s+1)!m_{s+1}e_{r-1}\big((\lambda^{>s}-\varepsilon_{s+1})_{s}\big)=ne_{r}\big((\lambda^{>s})_{s}\big) \label{sumnum}
\end{equation}
(because the denominator associated with $\lambda-\varepsilon_{s+1}$ misses one power of $(s+1)!$ and one coefficient of $m_{s+1}$ to become equal to that of $\lambda$). Now, it is easy to verify that the partition $\lambda_{j}^{>s}$ is the same as $\lambda^{>s}$ when for $j \leq s$, it coincides with $\lambda^{>s}-\varepsilon_{s+1}$ for $j=s+1$, and it equals $(\lambda^{>s})_{j}$ when $j>s+1$. It follows that $(\lambda_{j}^{>s})_{s}$ is $(\lambda^{>s})_{s}$ in the first case, $(\lambda^{>s})_{s}-\varepsilon_{(s+1)_{s}}$ in the second one, and it obtained from $(\lambda^{>s})_{s}$ by replacing one instance of $(j)_{s}$ by $(j-1)_{s}$.

We may therefore evaluate the summands with $j>s$ on the left hand side of Equation \eqref{sumnum} via Lemma \ref{ertrans}. For $j=s+1$ we obtain
\[(s+1)m_{s+1}\big[e_{r}\big((\lambda^{>s})_{s}\big)-(s+1)_{s}\sum_{k=1}^{r}\big(-(s+1)_{s}\big)^{k-1}e_{r-k}\big((\lambda^{>s})_{s}\big)\big],\] and as the remaining term becomes \[(s+1)!m_{s+1}\sum_{k=1}^{r}\big(-(s+1)_{s}\big)^{k-1}e_{r-k}\big((\lambda^{>s})_{s}\big)\] after a summation index change, where $(s+1)!=(s+1)_{s}=(s+1)(s)_{s-1}$, the missing factor of $s+1$ in the latter equation in comparison with the preceding one shows that they combine to \[(s+1)m_{s+1}\big[e_{r}\big((\lambda^{>s})_{s}\big)-s(s)_{s-1}\sum_{k=1}^{r}\big(-(s+1)_{s}\big)^{k-1}e_{r-k}\big((\lambda^{>s})_{s}\big)\big]\] (this is valid also when $s=0$, where the sum over $k$ is indeed multiplied by a vanishing coefficient). On the other hand, when $j>s+1$ we observe that $(j)_{s}-(j-1)_{s}=s(j-1)_{s-1}$ (once again this equality holds also when $s=0$, in the form of $1-1=0$), so that we get \[jm_{j}\big[e_{r}\big((\lambda^{>s})_{s}\big)-s(j-1)_{s-1}\sum_{k=1}^{r}\big(-(j)_{s}\big)^{k-1}e_{r-k}\big((\lambda^{>s})_{s}\big)\big],\] of which the latter expression is the case with $j=s+1$. Altogether the left hand side of Equation \eqref{sumnum} equals
\begin{equation}
\sum_{j\geq1}jm_{j}e_{r}\big((\lambda^{>s})_{s}\big)-\sum_{j>s}sjm_{j}(j-1)_{s-1}\sum_{k=1}^{r}\big(-(j)_{s}\big)^{k-1}e_{r-k}\big((\lambda^{>s})_{s}\big). \label{LHS}
\end{equation}

But recalling that $j(j-1)_{s-1}=(j)_{s}$ for every such $j$, we deduce that for each $1 \leq k \leq r$, the summand $e_{r-k}\big((\lambda^{>s})_{s}\big)$ (which is independent of $j$) is multiplied by $-s\sum_{j>s}(-1)^{k-1}m_{j}(j)_{s}^{k}$, which equals $-s(-1)^{k-1}p_{k}\big((\lambda^{>s})_{s}$ by definition. It thus follows from Lemma \ref{epiden} that the second expression in Equation \eqref{LHS} is just $-sre_{r}\big((\lambda^{>s})_{s}\big)$. Since the first term there is $|\lambda|e_{r}\big((\lambda^{>s})_{s}\big)$, and we know that $|\lambda|=n-rs$, we indeed establish Equation \eqref{sumnum}. This proves the proposition.
\end{proof}

\section{The Main Result and Some Consequences \label{Main}}

We can now prove our main result.
\begin{thm}
For every three functions with derivatives of high enough order, and for every integer $s\geq0$, The $n$th derivative of $(f\circ\varphi)\cdot\big(g\circ\varphi^{(s)}\big)$ is
\[\sum_{r\geq0}\sum_{\lambda \vdash n+rs}\frac{n!e_{r}\big((\lambda^{>s})_{s}\big)}{\prod_{i=1}^{n}(i!)^{m_{i}}m_{i}!}\big(f^{(\ell(\lambda)-r)}\circ\varphi\big)\big(g^{(r)}\circ\varphi^{(s)}\big)\prod_{i=1}^{n}\big(\varphi^{(i)}\big)^{m_{i}}.\] Moreover, this expression coincides with the one from Equation \eqref{finres}, in which we pose the restrictions $r \leq n$ and $\ell(\lambda^{>s}) \geq r$, and is therefore finite. \label{main}
\end{thm}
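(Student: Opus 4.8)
The plan is to prove the displayed formula by induction on $n$, following the template of the proof of Proposition \ref{FdBprop}, with the three-term recursion of Proposition \ref{recrpos} supplying the coefficient identity at the inductive step; I would then deduce the equivalence with Equation \eqref{finres}, the two restrictions, and finiteness as a short closing argument.

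The base case $n=0$ is immediate: the only partition of $0$ is the empty one, the double sum degenerates to its single $r=0$ term with coefficient $C_{\emptyset,0}^{(s)}=1$ and empty $\varphi$-product, so the right-hand side equals $(f\circ\varphi)\cdot(g\circ\varphi^{(s)})$. For the step I would assume the formula at level $n-1$ and differentiate it termwise in $t$ by the product rule. A summand attached to a pair $(\mu,r)$ with $\mu\vdash(n-1)+rs$ breaks into three families: (a) differentiating $f^{(\ell(\mu)-r)}\circ\varphi$ yields $f^{(\ell(\mu)-r+1)}\circ\varphi$ times a new $\varphi'$, i.e.\ a new part of size $1$ with $r$ unchanged; (b) differentiating $g^{(r)}\circ\varphi^{(s)}$ yields $g^{(r+1)}\circ\varphi^{(s)}$ times a new $\varphi^{(s+1)}$, i.e.\ a new part of size $s+1$ together with $r\mapsto r+1$; and (c) differentiating a factor $(\varphi^{(i)})^{m_i(\mu)}$ yields the multiplicity $m_i(\mu)$ times $\varphi^{(i+1)}$ replacing one $\varphi^{(i)}$, i.e.\ the promotion of a part of size $i$ to one of size $i+1$ with $r$ unchanged.

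I would then fix a target pair $(\lambda,r)$ with $\lambda\vdash n+rs$ and collect every contribution landing on it, exactly as in Proposition \ref{FdBprop}: by Definition \ref{lambdajdef} the source $\mu$ is $\lambda_1=\lambda-\varepsilon_1$ for family (a) (present only if $m_1(\lambda)\ge1$); it is $\lambda_j$ for family (c) with $j=i+1\ge2$ (present only if $m_j(\lambda)\ge1$), carrying by Lemma \ref{lambdaj} the factor $m_{j-1}(\mu)=m_{j-1}(\lambda)+1$; and it is $\lambda-\varepsilon_{s+1}$ at the previous level $r-1$ for family (b) (present only if $r\ge1$ and $m_{s+1}(\lambda)\ge1$). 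Summing these contributions with the recorded factors reproduces the entire right-hand side of the recursion in Proposition \ref{recrpos}, which equals $C_{\lambda,r}^{(s)}$, completing the induction. I expect the main difficulty to be organizational rather than computational: one must verify that families (b) and (c) never collide at the part size $s+1$ (they reach it through the distinct source pairs $(\lambda-\varepsilon_{s+1},r-1)$ and $(\lambda_{s+1},r)$, so nothing is double-counted) and that the factorial rescaling from level $n-1$ to level $n$ matches exactly what Proposition \ref{recrpos} already absorbs. Since that proposition packages the genuine combinatorics via Lemmas \ref{ertrans} and \ref{epiden}, the induction itself is pure bookkeeping.

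Finally I would settle Equation \eqref{finres} and finiteness. The partition $(\lambda^{>s})_s$ has exactly $\ell(\lambda^{>s})$ parts, all positive, so $e_r\big((\lambda^{>s})_s\big)$ is an empty sum and vanishes whenever $r>\ell(\lambda^{>s})$; hence imposing $\ell(\lambda^{>s})\ge r$ discards only zero terms. For the surviving terms each of those $\ge r$ parts exceeds $s$, so $n+rs=|\lambda|\ge|\lambda^{>s}|\ge r(s+1)$, which rearranges to $r\le n$; thus the outer sum truncates to $0\le r\le n$, and as there are finitely many $\lambda\vdash n+rs$ for each such $r$, the whole expression is finite. The same chain gives $\ell(\lambda)\ge\ell(\lambda^{>s})\ge r$, so $\ell(\lambda)-r\ge0$ and every summand is well defined, under the standing assumption that $f$, $g$, and $\varphi$ possess derivatives to the orders that occur.
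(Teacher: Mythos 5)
Your proposal is correct and follows essentially the same route as the paper: induction on $n$ with the three families of contributions (differentiating the $f$-factor, the $g$-factor, and the $\varphi^{(i)}$-factors) collected at a target pair $(\lambda,r)$ and matched against the recursion of Proposition \ref{recrpos}, together with the same vanishing-of-$e_{r}$ and $|\lambda|\geq r(s+1)$ arguments for the redundancy of the restrictions and finiteness. The only difference is the order (the paper disposes of the restrictions first), which is immaterial.
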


\begin{proof}
We first prove that the two restrictions in Equation \eqref{finres} are redundant. Indeed, the elementary symmetric function $e_{r}$ is known to vanish when we substitute less than $r$ distinct parameters, and as $\ell\big((\lambda^{>s})_{s}\big)=\ell(\lambda^{>s})$, adding the restriction that this length is at least $r$ does not affect the resulting sum. Now, if $\lambda \vdash n+rs$ and $\ell(\lambda^{>s}) \geq r$ then $\lambda$ contains at least $r$ summands, all of which are at least $s+1$. We therefore obtain the inequality $n+rs=|\lambda| \geq r(s+1)$, which implies that such partitions exist only when $r \leq n$ as desired. The finiteness of the set of possible indices $r$ and of the number of partitions of $n+rs$ for every $0 \leq r \leq n$ thus yield the finiteness of our formula.

For establishing the formula itself we follow the proof of Proposition \ref{FdBprop} and argue by induction on $n$, where the case with $n=0$ is now clearly trivial (we only have $r=0$ and $\lambda\vdash0$). Assume now that $n>0$ and that our formula is true for $n-1$, and differentiate with respect to $t$ again. Given $0 \leq k \leq n-1$ and $\mu \vdash n-1+ks$, we first obtain contributions from differentiating $f^{(\ell(\mu)-k)}\circ\varphi$, yielding a summand associated with $r=k$ and with a partition $\lambda \vdash n+ks$, with $m_{1}(\lambda)\geq1$ and such that $\lambda_{1}=\lambda-\varepsilon_{1}=\mu$ via Definition \ref{lambdajdef}. The differentiation of one of the multipliers $\varphi^{(j-1)}$ with $j\geq2$ will again produce a summand corresponding to $r=k$ and to a partition $\lambda \vdash n+ks$ for which $m_{j}(\lambda)\geq1$ and $\lambda_{j}=\mu$ as in Definition \ref{lambdajdef}, with an extra multiplier of $m_{j-1}(\mu)$. But here we can also differentiate the multiplier $g^{(k)}\circ\varphi^{(s)}$, whose derivative is $\big(g^{(k+1)}\circ\varphi^{(s)}\big)\cdot\varphi^{(s+1)}$, and the resulting summand is based on $r=k+1$ and on $\lambda \vdash n+(k+1)s$, where here $m_{s+1}(\lambda)\geq1$ and $\mu$ is $\lambda-\varepsilon_{s+1}$. Therefore we indeed obtain the asserted sum over $r$ and $\lambda$, and it remain to compare the coefficients. We take some $r\geq0$ and $\lambda \vdash n+rs$, and using Lemma \ref{lambdaj} we deduce, from the induction hypothesis, that we have a contribution of $m_{j-1}(\lambda)+1$ times $C_{\lambda_{j},r}^{(s)}$ for every $j\geq1$ such that $m_{j}(\lambda)\geq1$, and when $r\geq1$ and $m_{s+1}(\lambda)\geq1$ we also obtain a contribution of $C_{\lambda-\varepsilon_{s+1},r-1}^{(s)}$, with no extra coefficient. In other words, the total coefficient multiplying the summand associated with $r$ and $\lambda$ is the one appearing in the right hand side of Proposition \ref{recrpos}, which therefore equals $C_{\lambda,r}^{(s)}$ by this proposition. This proves the theorem.
\end{proof}
We remark that the restriction $r \leq n$ in Theorem \ref{main} and Equation \eqref{finres} corresponds to the fact that in such an $n$th derivative we cannot differentiate $g$ to an order exceeding $n$.

\smallskip

We recall that the coefficients $C_{\lambda,0}^{(s)}$ from Equation \eqref{FaadiBruno} and Proposition \eqref{FdBprop} are integers. Indeed, each such coefficient $C_{\lambda,0}^{(s)}$ has a combinatorial meaning, where it counts the number of ways to put $n=|\lambda|$ numbered balls in boxes whose sizes are determined by $\lambda$, where boxes of the same size are identical. The first consequence that we draw from Theorem \ref{main} is the integrality of the other coefficients, which is much less trivial in first sight.
\begin{cor}
For every $r\geq0$, $s\geq0$, $n\geq0$, and partition $\lambda \vdash n+rs$, the rational number $C_{\lambda,r}^{(s)}$ from Equation \eqref{finres} and Theorem \ref{main} is an integer. \label{integer}
\end{cor}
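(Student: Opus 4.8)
The plan is to prove integrality by induction on $n$, using the recurrence of Proposition \ref{recrpos} as the engine. A priori $C_{\lambda,r}^{(s)}$ is merely a rational number: its numerator $n!\,e_{r}\big((\lambda^{>s})_{s}\big)$ is an integer, since $e_{r}$ evaluated at integers is an integer, but there is no evident reason for the denominator $\prod_{i=1}^{n}(i!)^{m_{i}}m_{i}!$ to divide it once $r\geq1$, because the factorial available in the numerator is $(|\lambda|-rs)!$ rather than $|\lambda|!$. Consequently the combinatorial interpretation that makes the case $r=0$ transparent is unavailable, and instead I would feed the induction hypothesis into Proposition \ref{recrpos}.

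First I would fix $s\geq0$ and induct on $n\geq0$, asserting at each stage that $C_{\lambda,r}^{(s)}\in\mathbb{Z}$ for all $r\geq0$ and all $\lambda\vdash n+rs$. The base case $n=0$ is immediate: by the first part of Theorem \ref{main} the constraint $r\leq n$ forces $r=0$ and $\lambda$ to be the empty partition, whence $C_{\lambda,0}^{(s)}=1$, while any coefficient with $n=0$ and $r\geq1$ vanishes because then $\ell(\lambda^{>s})<r$, so that $e_{r}\big((\lambda^{>s})_{s}\big)=0$. For the inductive step, take $\lambda\vdash n+rs$ with $n\geq1$, so that $|\lambda|\geq1$ and Proposition \ref{recrpos} applies, giving
\[C_{\lambda,r}^{(s)}=\sum_{j\geq1}(m_{j-1}+1)\overline{\delta}_{m_{j},0}C_{\lambda_{j},r}^{(s)}+\overline{\delta}_{r,0}\overline{\delta}_{m_{s+1},0}C_{\lambda-\varepsilon_{s+1},r-1}^{(s)}.\]
Every multiplier $(m_{j-1}+1)\overline{\delta}_{m_{j},0}$ and $\overline{\delta}_{r,0}\overline{\delta}_{m_{s+1},0}$ is a nonnegative integer, so it suffices to know that each coefficient appearing on the right is an integer.

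The one point requiring care, and essentially the only obstacle, is the grading bookkeeping that lets the induction hypothesis cover every term on the right. Here Lemma \ref{lambdaj} does the work: for each $j$ with $m_{j}\geq1$ we have $|\lambda_{j}|=|\lambda|-1=(n-1)+rs$, so $C_{\lambda_{j},r}^{(s)}$ is a coefficient at level $n-1$ with the same index $r$; and $|\lambda-\varepsilon_{s+1}|=|\lambda|-(s+1)=(n-1)+(r-1)s$, so $C_{\lambda-\varepsilon_{s+1},r-1}^{(s)}$ is a coefficient at level $n-1$ with index $r-1$. Thus all contributions live at level $n-1$ and are integers by the induction hypothesis, and $C_{\lambda,r}^{(s)}$ is a nonnegative integer combination of integers, hence an integer. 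I expect no genuine difficulty beyond verifying that both families of contributing partitions lower $n$ by exactly one, one keeping $r$ fixed and the other lowering $r$ by one with the compensating drop of $s$ in $|\lambda|$, so that the single parameter $n$ is the correct quantity on which to induct.
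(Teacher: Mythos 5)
Your proposal is correct and follows essentially the same route as the paper: induction on $n$, with the base case $n=0$ read off from Theorem \ref{main} and the inductive step driven by the recurrence of Proposition \ref{recrpos}, whose multipliers are nonnegative integers. The grading bookkeeping you spell out (that $\lambda_{j}$ sits at level $n-1$ with the same $r$, and $\lambda-\varepsilon_{s+1}$ at level $n-1$ with index $r-1$) is exactly what makes the paper's terser argument work, so you have if anything been more explicit than the original.
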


\begin{proof}
Theorem \ref{main} shows that for $n=0$ this coefficient is 1 when $r=0$ and 0 otherwise, and Proposition \ref{recrpos} evaluates each such coefficient as a combination of previous ones with integral coefficients. The assertion thus follows by induction on $n$ as in the proof of Theorem \ref{main}. This proves the corollary.
\end{proof}
We remark that Corollary \ref{integer} does not follow from the case $r=0$ by the obvious integrality of $e_{r}\big((\lambda^{>s})_{s}\big)$, because for $\lambda \vdash n+rs$ the coefficient $C_{\lambda,0}^{(s)}$ has $(n+rs)!$ in the numerator, while for $C_{\lambda,r}^{(s)}$ it is just $n!$. Note that the case $s=1$ in Corollary \ref{integer} involves the moments of the partition $\lambda$ itself (up to the truncation to $\lambda^{>1}$), because the operation of taking $a$ to $(a)_{1}$ is trivial. In particular we obtain the integrality of $n!e_{r}(\lambda^{>1})\big/\prod_{j}j!^{m_{j}}m_{j}!$ for every partition $\lambda \vdash n+r$.

\smallskip

Recall that Theorem \ref{main} evaluates the $n$th derivative of a product of compositions, which we can also evaluate using Leibnitz's Rule and the original formula of Fa\`{a} di Bruno. We now use this fact for obtaining an alternative proof for that theorem, which will also make the connection to the algebraic formulae from \cite{[MS]} more visible. This proof will use an identity for the elementary moments $e_{r}\big((\lambda^{>s})_{s}\big)$, for describing which we recall the following definition.
\begin{defn}
Let $\mu$ and $\nu$ be two partitions. Then $\mu\cup\nu$ is the partition obtained by taking all the summands in $\mu$ and all those of $\nu$, combining them together, and ordering the resulting sequence in decreasing order. In addition, if $s\geq0$ is an integer then we denote by $\mu^{+s}$ the partition obtained by adding $s$ to each of the summands of $\mu$. \label{union}
\end{defn}
It is clear from Definition \ref{union} that
\begin{equation}
|\mu\cup\nu|=|\mu|+|\nu|,\ \ell(\mu\cup\nu)=\ell(\mu)+\ell(\nu),\mathrm{\ and\ }m_{i}(\mu\cup\nu)=m_{i}(\mu)+m_{i}(\nu)\mathrm{\ for\ }i\geq1 \label{unsum}
\end{equation}
as well as
\begin{equation}
\ell(\mu^{+s})=\ell(\mu),\ |\mu^{+s}|=|\mu|+s\ell(\mu),\mathrm{\ and\ }m_{i}(\mu^{+s})=m_{i-s}(\mu)\mathrm{\ for all\ }i>s \label{val+s}
\end{equation}
(while $m_{i}(\mu^{+s})=0$ in case $i \leq s$). Equation \eqref{unsum} now shows that given two partitions $\lambda$ and $\mu$, we can write $\lambda$ as $\mu\cup\nu$ for some partition $\nu$ if and only if $m_{i}(\lambda) \geq m_{i}(\mu)$ for every $i\geq1$ (a condition which we write as simply $\mu\leq\lambda$), and then the partition $\nu$ for which $\lambda=\mu\cup\nu$ is uniquely determined. Similarly, a partition $\lambda$ is of the form $\mu^{+s}$ if and only if $m_{i}(\lambda)=0$ in case $i \leq s$ (i.e., $\lambda=\lambda^{>s}$), and then $\mu$ is uniquely determined.

The formula that we shall use is the following one.
\begin{prop}
Given a partition $\eta$ and integers $s\geq0$ and $r\geq0$, we have \[e_{r}\big((\eta)_{s}\big)=\sum_{\ell(\nu)=r}\prod_{i\geq1}\binom{m_{i}(\eta)}{m_{i}(\nu)}\prod_{i\geq1}(i)_{s}^{m_{i}(\nu)}.\] \label{formfore}
\end{prop}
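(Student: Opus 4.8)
The plan is to read off the claimed identity as the coefficient of $X^r$ in the generating function for the elementary symmetric functions of the summands of $(\eta)_s$. Concretely, I would apply Equation \eqref{prodfore} to the numbers $(a_l)_s$, $1\leq l\leq\ell(\eta)$, which are exactly the summands of the partition $(\eta)_s$; this gives
\[\sum_{r\geq0}e_r\big((\eta)_s\big)X^r=\prod_{l=1}^{\ell(\eta)}\big(1+(a_l)_sX\big),\]
a finite sum since $e_r\big((\eta)_s\big)=0$ for $r>\ell(\eta)$. Since the summand $i$ occurs in $\eta$ with multiplicity $m_i(\eta)$, the factor $1+(i)_sX$ appears exactly $m_i(\eta)$ times in this product, so the right hand side equals $\prod_{i\geq1}\big(1+(i)_sX\big)^{m_i(\eta)}$.

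Next I would expand each factor by the binomial theorem, writing $\big(1+(i)_sX\big)^{m_i(\eta)}=\sum_{k\geq0}\binom{m_i(\eta)}{k}(i)_s^kX^k$, where the binomial coefficient vanishes once $k>m_i(\eta)$, so each such sum is really finite. Multiplying these series over all $i$ and extracting the coefficient of $X^r$ produces a sum, over all tuples of nonnegative integers $(k_i)_{i\geq1}$ with $\sum_{i\geq1}k_i=r$ (only finitely many nonzero), of the product $\prod_{i\geq1}\binom{m_i(\eta)}{k_i}(i)_s^{k_i}$.

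Finally I would reinterpret each such tuple $(k_i)$ as the multiplicities of a partition $\nu$, by setting $m_i(\nu)=k_i$; this is a bijection between tuples and partitions, and under it the constraint $\sum_i k_i=r$ becomes precisely the condition $\ell(\nu)=r$. Extending the sum to range over \emph{all} partitions $\nu$ with $\ell(\nu)=r$ introduces no new contributions, because whenever $m_i(\nu)>m_i(\eta)$ for some $i$ the factor $\binom{m_i(\eta)}{m_i(\nu)}$ vanishes. Rewriting the product in the form $\prod_{i\geq1}\binom{m_i(\eta)}{m_i(\nu)}\prod_{i\geq1}(i)_s^{m_i(\nu)}$ then yields exactly the asserted right hand side.

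I do not expect a genuine obstacle here: the argument is a direct manipulation of the product formula \eqref{prodfore} followed by bookkeeping. The only point requiring care is the last step, namely checking that enlarging the index set from tuples with $k_i\leq m_i(\eta)$ to all partitions $\nu$ of length $r$ is harmless, which is precisely the vanishing of the binomial coefficients noted above; once this is observed, the two sides match term by term.
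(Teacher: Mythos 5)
Your proof is correct. The route is formally different from the paper's, though the combinatorial content is the same. The paper argues directly from the definition of $e_{r}$: it reads $e_{r}\big((\eta)_{s}\big)$ as the sum over all choices of $r$ entries from the multiset of summands of $\eta$, observes that each such choice determines a partition $\nu\leq\eta$ with $\ell(\nu)=r$, and counts the number of choices producing a given $\nu$ as $\prod_{i\geq1}\binom{m_{i}(\eta)}{m_{i}(\nu)}$, with the same remark you make that the vanishing of these binomial coefficients renders the condition $\nu\leq\eta$ redundant. You instead pass through the generating function: you apply Equation \eqref{prodfore} to the numbers $(a_{l})_{s}$, group equal factors into $\prod_{i\geq1}\big(1+(i)_{s}X\big)^{m_{i}(\eta)}$, expand by the binomial theorem, and extract the coefficient of $X^{r}$, identifying tuples $(k_{i})$ with $\sum_{i}k_{i}=r$ with partitions $\nu$ of length $r$ via $m_{i}(\nu)=k_{i}$. (One point worth noting: your grouping is by the value of the original summand $i$, not by the value of $(i)_{s}$, which is the right choice since distinct summands $i\leq s$ can share the Pochhammer value $(i)_{s}=0$; your argument is unaffected by such collisions.) What the generating-function route buys is that the multiplicity count is automated by the binomial theorem rather than argued combinatorially; what the paper's route buys is brevity and a direct combinatorial interpretation, which the author then reuses to explain Equation \eqref{binellr}. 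Both hinge on exactly the same two observations, so the difference is one of packaging rather than substance.
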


\begin{proof}
The elementary moment $e_{r}\big((\eta)_{s}\big)$ is, by definition, the sum of all the possible products of $(j)_{s}$ for $r$ different terms $j$ that appear in $\eta$. Any such collection corresponds to a partition $\nu$ that satisfies $\nu\leq\eta$ and $\ell(\nu)=r$, but it may happen that different choices will produce the same partition $\nu$. More precisely, for a choice of $r$ entries from $\eta$ to produce $\nu$ we need to choose $m_{i}(\nu)$ of the entries that equal $i$ out of the $m_{i}(\eta)$ possible ones, so that doing it for every $i$ yields the asserted product as the multiplicity (which also makes the condition $\nu\leq\eta$ redundant, since the product vanishes wherever it is not satisfied). Since for any $\nu$ the expression appearing with that multiplicity is $\prod_{i\geq1}(i)_{s}^{m_{i}(\nu)}$, we indeed obtain the asserted result. This proves the proposition.
\end{proof}

\begin{cor}
For a partition $\lambda$ with $r$ and $s$ as above we get \[e_{r}\big((\lambda^{>s})_{s}\big)=\sum_{\ell(\mu)=r}\prod_{i>s}\binom{m_{i}(\lambda)}{m_{i-s}(\mu)}\prod_{i\geq1}\bigg(\frac{(i+s)!}{i!}\bigg)^{m_{i}(\mu)}.\] \label{combiden}
\end{cor}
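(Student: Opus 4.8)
The plan is to deduce this directly from Proposition \ref{formfore}, applied to the partition $\eta=\lambda^{>s}$. This immediately gives
\[e_{r}\big((\lambda^{>s})_{s}\big)=\sum_{\ell(\nu)=r}\prod_{i\geq1}\binom{m_{i}(\lambda^{>s})}{m_{i}(\nu)}\prod_{i\geq1}(i)_{s}^{m_{i}(\nu)},\]
and the remaining task is purely to rewrite the right-hand side as a sum over partitions $\mu$ with $\ell(\mu)=r$.

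First I would invoke the definition of the truncation, namely $m_{i}(\lambda^{>s})=m_{i}(\lambda)$ for $i>s$ and $m_{i}(\lambda^{>s})=0$ for $i\leq s$. Since a binomial coefficient $\binom{0}{m}$ vanishes unless $m=0$, only those $\nu$ with $m_{i}(\nu)=0$ for all $i\leq s$ contribute to the sum; equivalently, only those $\nu$ with $\nu=\nu^{>s}$ contribute. By the characterization recorded after Equation \eqref{val+s}, each such $\nu$ is uniquely of the form $\nu=\mu^{+s}$ for a partition $\mu$, and Equation \eqref{val+s} then yields $\ell(\nu)=\ell(\mu)$ together with $m_{i}(\nu)=m_{i-s}(\mu)$ for $i>s$. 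Thus the constraint $\ell(\nu)=r$ becomes $\ell(\mu)=r$, and the sum over the contributing $\nu$ becomes a sum over all $\mu$ with $\ell(\mu)=r$.

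Next I would simplify the two products separately. In the binomial product the factors with $i\leq s$ are all $\binom{0}{0}=1$, so only the factors with $i>s$ survive, giving $\prod_{i>s}\binom{m_{i}(\lambda)}{m_{i-s}(\mu)}$, which is exactly the first product in the statement. In the second product the factors with $i\leq s$ likewise contribute $1$, and after the index substitution $i\mapsto i+s$ the surviving factors become $\prod_{i\geq1}(i+s)_{s}^{m_{i}(\mu)}$. Finally I would apply the identity $(i+s)_{s}=\frac{(i+s)!}{i!}$, which is just the Pochhammer formula $(a)_{s}=\frac{a!}{(a-s)!}$ recorded in the introduction, to rewrite this as $\prod_{i\geq1}\big(\frac{(i+s)!}{i!}\big)^{m_{i}(\mu)}$, matching the second product and completing the proof.

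I do not expect a serious obstacle here; the whole argument is a specialization of Proposition \ref{formfore} followed by bookkeeping. The only point requiring care is verifying that the truncation forces the support of every contributing $\nu$ (hence of $\mu$) to lie strictly above $s$, so that the passage $\nu=\mu^{+s}$ is well-defined and constitutes a bijection onto exactly the nonvanishing terms of the sum.
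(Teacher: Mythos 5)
Your proposal is correct and follows essentially the same route as the paper: specialize Proposition \ref{formfore} to $\eta=\lambda^{>s}$, observe that the vanishing of $\binom{0}{m}$ for $m>0$ restricts to $\nu=\mu^{+s}$, and reindex via Equation \eqref{val+s} together with $(i+s)_{s}=\frac{(i+s)!}{i!}$. The paper's proof is merely a terser version of the same bookkeeping.
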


\begin{proof}
We consider the formula from Proposition \ref{formfore}. For every partition $\nu$ satisfying $\nu\leq\lambda^{>s}$ we have $m_{i}(\nu)=0$ for any $i \leq s$, which allows us to write $\nu$ as $\mu^{+s}$ for a partition $\mu$. Equation \eqref{val+s} and changing the index $i$ to $i-s$ thus transforms the formula from that proposition into the asserted one. This proves the corollary.
\end{proof}

We remark about the case $s=0$ in Proposition \ref{formfore} and Corollary \ref{combiden}, where both formulae coincide since $\lambda^{>0}=\lambda$. Since $(a)_{0}=1$ for every $a\geq1$ (and $\frac{(i+0)!}{i!}=1$), both equalities reduce to
\begin{equation}
\binom{\ell(\lambda)}{r}=\sum_{\ell(\mu)=r}\prod_{i\geq1}\binom{m_{i}(\lambda)}{m_{i}(\mu)}. \label{binellr}
\end{equation}
While Equation \eqref{binellr} seems non-trivial algebraically, the proof of Proposition \ref{formfore} explains its combinatorial interpretation. Indeed, one may view the partition $\lambda$ as a marking of $\ell(\lambda)$ balls, where the number of balls that are marked by $i$ is $m_{i}(\lambda)$, and we ask in how many ways can one choose $r$ of these balls. Now, every such choice will correspond to a partition $\mu$ with $\ell(\mu)=r$ (and $m_{i}(\mu) \leq m_{i}(\lambda)$ for every $i\geq1$) according to the markings, and given such a partition $\mu$, the number of options to choose $m_{i}(\mu)$ balls out of the $m_{i}(\lambda)$ ones that are marked with $i$ is $\binom{m_{i}(\lambda)}{m_{i}(\mu)}$. The answer to our question, which is known to be $\binom{\ell(\lambda)}{r}$, is thus obtained by multiplying over $i$ for each $\mu$, and then summing over $\mu$ as desired. Therefore Proposition \ref{formfore} may be viewed as a generalization of Equation \eqref{binellr} to $s\geq0$, and Corollary \ref{combiden} is its special case that is related to Theorem \ref{main}.

\smallskip

For obtaining the alternative proof of Theorem \ref{main} (via Corollary \ref{combiden}), we shall use the following lemma.
\begin{lem}
For four sufficiently differentiable functions $f$, $g$, $\varphi$, and $\psi$, the $n$th derivative of $(f\circ\varphi)\cdot(g\circ\psi)$ is \[\sum_{r=0}^{n}\sum_{\rho \vdash n}\!\frac{n!\big(f^{(\ell(\rho)-r)}\circ\varphi\big)\big(g^{(r)}\circ\psi\big)}{\prod_{i\geq1}i!^{m_{j}(\rho)}m_{i}(\rho)!}\!\!\sum_{\ell(\mu)=r}\prod_{i\geq1}\!\binom{m_{i}(\rho)}{m_{i}(\mu)} \big(\varphi^{(i)}\big)^{m_{i}(\rho)-m_{i}(\mu)}\!\big(\psi^{(i)}\big)^{m_{i}(\mu)}\!.\] \label{derofprod}
\end{lem}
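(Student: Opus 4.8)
The plan is to combine Leibniz's Rule for the $n$th derivative of a product with the formula of Fa\`{a} di Bruno from Proposition \ref{FdBprop}, and then reorganize the resulting sum over pairs of partitions. First I would write, by Leibniz's Rule,
\[\frac{d^{n}}{dt^{n}}\big[(f\circ\varphi)(g\circ\psi)\big]=\sum_{a+b=n}\binom{n}{a}(f\circ\varphi)^{(a)}(g\circ\psi)^{(b)},\]
and then apply Proposition \ref{FdBprop} to each factor separately---once with $\varphi$ and a partition $\alpha\vdash a$, and once with $\psi$ in place of $\varphi$ and a partition $\beta\vdash b$, since that proposition holds verbatim for any composition. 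This produces a triple sum over $a+b=n$, $\alpha\vdash a$, and $\beta\vdash b$, whose scalar coefficient is $\binom{n}{a}C_{\alpha,0}^{(s)}C_{\beta,0}^{(s)}$. Using $C_{\alpha,0}^{(s)}=a!\big/\prod_{i}i!^{m_{i}(\alpha)}m_{i}(\alpha)!$ and the analogue with $b!$ for $\beta$, together with $\binom{n}{a}a!b!=n!$, this collapses to $n!\big/\prod_{i}i!^{m_{i}(\alpha)+m_{i}(\beta)}m_{i}(\alpha)!m_{i}(\beta)!$, while the functional part is $\big(f^{(\ell(\alpha))}\circ\varphi\big)\big(g^{(\ell(\beta))}\circ\psi\big)\prod_{i}(\varphi^{(i)})^{m_{i}(\alpha)}(\psi^{(i)})^{m_{i}(\beta)}$.

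The key step is the re-indexing. I would set $\rho=\alpha\cup\beta$ and $\mu=\beta$, using Definition \ref{union} and Equation \eqref{unsum}, so that $m_{i}(\rho)=m_{i}(\alpha)+m_{i}(\beta)$ and hence $m_{i}(\alpha)=m_{i}(\rho)-m_{i}(\mu)$ and $m_{i}(\beta)=m_{i}(\mu)$; moreover $|\rho|=|\alpha|+|\beta|=a+b=n$, $\ell(\beta)=\ell(\mu)$, and $\ell(\alpha)=\ell(\rho)-\ell(\mu)$. Conversely, a pair $(\rho,\mu)$ with $\rho\vdash n$ and $\mu\leq\rho$ determines $\beta=\mu$ together with the unique $\alpha$ having $m_{i}(\alpha)=m_{i}(\rho)-m_{i}(\mu)$, so $(\alpha,\beta)\mapsto(\rho,\mu)$ is a bijection onto such pairs. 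Writing $r=\ell(\mu)$, which ranges from $0$ to $n$, the functional part becomes exactly $\big(f^{(\ell(\rho)-r)}\circ\varphi\big)\big(g^{(r)}\circ\psi\big)\prod_{i}(\varphi^{(i)})^{m_{i}(\rho)-m_{i}(\mu)}(\psi^{(i)})^{m_{i}(\mu)}$, as in the statement.

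It then remains only to match the scalars, which reduces to the factorial identity $\prod_{i}\binom{m_{i}(\rho)}{m_{i}(\mu)}=\prod_{i}m_{i}(\rho)!\big/m_{i}(\alpha)!m_{i}(\beta)!$ coming straight from $m_{i}(\alpha)=m_{i}(\rho)-m_{i}(\mu)$: dividing the target coefficient $n!\big/\prod_{i}i!^{m_{i}(\rho)}m_{i}(\rho)!$ by this product recovers precisely the collapsed coefficient above. A point worth noting is that the inner sum in the statement runs over all $\mu$ with $\ell(\mu)=r$ without the constraint $\mu\leq\rho$, but $\binom{m_{i}(\rho)}{m_{i}(\mu)}$ vanishes exactly when $m_{i}(\mu)>m_{i}(\rho)$ for some $i$, i.e.\ when $\mu\not\leq\rho$, so the spurious terms contribute nothing and the bijection is respected. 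I do not expect a genuine obstacle: the argument is bookkeeping, and the only place demanding care is keeping the two layers of multiplicities separate---those of $\rho$, which govern the powers of $i!$ and of $\varphi^{(i)}$, versus those of $\mu$, which govern the split of factors between $\varphi$ and $\psi$---while checking that the binomial coefficients reassemble correctly.
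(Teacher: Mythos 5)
Your proposal is correct and follows essentially the same route as the paper: Leibniz plus Fa\`{a} di Bruno applied to each factor, cancellation of $\binom{n}{a}$ against the two factorials, and re-indexing the pair of partitions as $(\rho,\mu)$ with $\rho$ the union, the binomial coefficients absorbing the constraint $\mu\leq\rho$. The only difference is notational (the paper writes $\nu\vdash p$ and $\mu\vdash n-p$ where you write $\alpha,\beta$), so there is nothing to add.
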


\begin{proof}
Leibnitz' Rule and Equation \eqref{FaadiBruno} (or Proposition \ref{FdBprop}) evaluate the derivative in question as $n!$ times \[\sum_{p=0}^{n}\sum_{\nu \vdash p}\frac{\big(f^{(\ell(\nu))}\circ\varphi\big)\prod_{i\geq1}\big(\varphi^{(i)}\big)^{m_{i}(\nu)}}{\prod_{i\geq1}i!^{m_{i}(\nu)}m_{i}(\nu)!}\sum_{\mu \vdash n-p}\frac{\big(g^{(\ell(\mu))}\circ\psi\big)\prod_{i\geq1}\big(\psi^{(i)}\big)^{m_{i}(\mu)}}{\prod_{i\geq1}i!^{m_{i}(\mu)}m_{i}(\mu)!},\] where the denominators of $\binom{n}{p}$ cancel with the numerators of $C_{\nu,0}^{(s)}$ and $C_{\mu,0}^{(s)}$ from that equation. We separate the second sum according to $r=\ell(\mu)$, and we replace $\nu$ by $\rho=|\mu\cup\nu|$, for which $|\rho|=n$ by Equation \eqref{unsum}, which also presents the terms associated with $\mu$ and $\nu$ in the asserted form (using $\rho$ and $\mu$). Since then $\mu\leq\rho$ and $p$ is determined by $\mu$ as $n-|\mu|$, and the product of the binomial coefficients makes the condition $\mu\leq\rho$ redundant, we indeed obtain the stated formula. This proves the lemma.
\end{proof}
For obtaining Theorem \ref{main}, we do not employ Lemma \ref{derofprod} (with $\psi=\varphi^{(s)}$, of course) as stated, but use the argument from its proof. Given $r$ and $\mu$, instead of using $\rho=\nu\cup\mu$, we set $\lambda=\nu\cup\mu^{+s}$, and we write $\prod_{i\geq1}\frac{1}{i!^{m_{i}(\mu)}}$ as $\prod_{i>s}\frac{1}{i!^{m_{i}(\mu^{+s})}}\prod_{i\geq1}\big(\frac{(i+s)!}{i!}\big)^{m_{i}(\mu)}$, using Equation \eqref{val+s} for the first product. Equation \eqref{unsum} now implies that $|\lambda|=n+rs$ and $\ell(\nu)$ is once again $\ell(\lambda)-r$ when $\ell(\mu)=r$, and gathering the derivatives of $\varphi$ and $\psi=\varphi^{(s)}$ yields just $\prod_{i\geq1}\big(\varphi^{(i)}\big)^{m_{i}(\lambda)}$. Since the parts depending on $\mu$ are just those from Corollary \ref{combiden}, the formula from Theorem \ref{main} is established.

The case where Lemma \ref{derofprod} can be applied directly is when $s=0$, so that $\rho=\lambda$ and $\psi=\varphi$, and the product on the right hand side of that lemma involves just the binomial coefficients and $\prod_{i=1}^{n}\big(\varphi^{(i)}\big)^{m_{i}(\lambda)}$. Since in this case all the partitions in Theorem \ref{main} are of $n$, the only dependence on $r$ goes into the sum $\sum_{r=0}^{n}\binom{\ell(\lambda)}{r}\big(f^{(\ell(\lambda)-r)}\circ\varphi\big)\big(g^{(r)}\circ\varphi\big)$, where the sum is essentially up to $\ell(\lambda)$ because of the binomial coefficient. As Leibnitz's Rule implies that this is simply $(fg)^{\ell(\lambda)}\circ\varphi$, the total formula is indeed, via Equation \eqref{FaadiBruno}, the $n$th derivative of $(fg)\circ\varphi$, a function which is clearly the same as $(f\circ\varphi)\cdot\big(g\circ\varphi\big)$.

\section{Modified Bell Polynomials \label{Bell}}

In order to connect our results with the algebraic setting given in, e.g., Subsection 3.2.5 of \cite{[MS]}, we recall that Fa\`{a} di Bruno's formula from Equation \eqref{FaadiBruno} can be expressed as
\begin{equation}
(f\circ\varphi)^{(n)}(t)=\sum_{k=0}^{n}\mathcal{B}_{n,k}\big(\varphi'(t),\ldots,\varphi^{(n-k+1)}(t)\big)f^{(k)}\big(\varphi(t)\big) \label{FdBBell}
\end{equation}
using the \emph{partial Bell polynomials}
\begin{equation}
\mathcal{B}_{n,k}(y_{1},\ldots,y_{n-k+1})=\!\sum_{\substack{\lambda \vdash n \\ \ell(\lambda)=k}}C_{\lambda,0}^{(0)}\prod_{i=1}^{n-k+1}y_{i}^{m_{i}}=\sum_{\substack{\lambda \vdash n \\ \ell(\lambda)=k}}\frac{n!}{\prod_{i=1}^{n}(i!)^{m_{i}}m_{i}!}\prod_{i=1}^{n-k+1}y_{i}^{m_{i}}. \label{Bellpols}
\end{equation}
The polynomial $\mathcal{B}_{n,k}$ has degree $k$ and weighted degree $n$, where the weighted degree is according to each variable $y_{i}$ having weight $i$ (with the convention that $\mathcal{B}_{0,0}$ is defined to be 1). The \emph{(complete) Bell polynomials} are the sums $\mathcal{B}_{n}(y_{1},\ldots,y_{n})=\sum_{k=1}^{n}\mathcal{B}_{n,k}(y_{1},\ldots,y_{n-k+1})$ (the sum is taken from 1 if $n\geq1$, but the index $k=0$ is included because $\mathcal{B}_{n}$ is defined to be 1 as well), where $\mathcal{B}_{n,k}$ can be recognized as the part of $\mathcal{B}_{n}$ that is homogenous of degree $k$. Some of their relations with other combinatorial numbers and functions can be seen by setting the variables $y_{i}$ to lie in a geometric sequence, i.e., $y_{i}=c^{i}x$ with $c$ and $x$ constant (this is equivalent to taking $\varphi(t)=e^{ct}$, where $x=e^{t}$). Then $\mathcal{B}_{n,k}(y_{1},\ldots,y_{n-k+1})$ becomes just $S(n,k)c^{n}x^{k}$, where $S(n,k)$ is the Stirling number of the second kind, and $\mathcal{B}_{n}(y_{1},\ldots,y_{n})$ reduces to $c^{n}$ times the Touchard polynomial $T_{n}(x)=\sum_{k=0}^{n}S(n,k)x^{k}$.

\smallskip

We would like to define polynomials of Bell type that will correspond to the formula from Theorem \ref{main} by separating according to $k=\ell(\lambda)$, for which we shall first need to bound the indices in terms of $n$. We have already seen that the index $r$ must lie between 0 and $n$, and we get $k=\ell(\lambda)\geq\ell(\lambda^{>s}) \geq r$ for non-triviality. On the other hand, if $\lambda \vdash n+rs$ satisfies $\ell(\lambda^{>s}) \geq r$ and $\ell(\lambda)=k$ then we can express $n+rs=|\lambda|$ as \[\sum_{i\geq1}im_{i}=\sum_{i>s}im_{i}+\sum_{i=1}^{s}im_{i}\geq(s+1)\sum_{i>s}m_{i}+\sum_{i=1}^{s}m_{i}=s\ell(\lambda^{>s})+\ell(\lambda) \geq sr+k,\] implying the inequality $k \leq n$ as well. We therefore set, for every $0 \leq r \leq k \leq n$ and every $s\geq0$, the \emph{modified partial Bell polynomial} \[\widetilde{\mathcal{B}}_{n,k,r}^{(s)}(y_{1},\ldots,y_{s+n-k+1})=\sum_{\substack{\lambda \vdash n+rs \\ \ell(\lambda)=k \\ \ell(\lambda^{>s}) \geq r}}\frac{n!e_{r}\big((\lambda^{>s})_{s}\big)}{\prod_{i=1}i!^{m_{i}}m_{i}!}\prod_{i=1}^{s+n-k+1}y_{i}^{m_{i}}\] (where the summand associated with $\lambda$ can be written as $C_{\lambda,r}^{(s)}\prod_{i=1}^{s+n-k+1}y_{i}^{m_{i}}$), and by what we have seen we may omit the inequalities between $r$, $k$, and $n$ and obtain just 0 wherever they are not satisfied (as well as when $k=0$ and $n>0$). The degree of $\widetilde{\mathcal{B}}_{n,k,r}^{(s)}$ is $k$, and the weighted degree is $n+rs$, implying that there is a \emph{modified Stirling number of the second kind} $\widetilde{S}(n,k,r)$ such that with $y_{i}=c^{i}x$ (i.e., $\varphi(t)=e^{ct}$ and $x=e^{t}$ again) this polynomial reduces to $c^{n+rs}\widetilde{S}(n,k,r)x^{k}$ (we shall soon see that they are indeed independent of $s$). With these polynomials, Theorem \ref{main} takes the form
\begin{equation}
\big[(f\circ\varphi)\cdot\big(g\circ\varphi^{(s)}\big)\big]^{(n)}=\sum_{r=0}^{n}\sum_{k=0}^{n}\widetilde{\mathcal{B}}_{n,k,r}^{(s)}\big(\varphi',\ldots,\varphi^{(s+n-k+1)}\big)\big(f^{(k-r)}\circ\varphi\big) \big(g^{(r)}\circ\varphi^{(s)}\big). \label{modBell}
\end{equation}
The \emph{modified complete Bell polynomial} $\widetilde{\mathcal{B}}_{n}^{(s)}(y_{1},\ldots,y_{s+n})$ is similarly defined to be $\sum_{r=0}^{n}\sum_{k=r}^{n}\widetilde{\mathcal{B}}_{n,k,r}^{(s)}(y_{1},\ldots,y_{s+n-k+1})$, where now the part that is homogenous of degree $k$ is the partial sum $\sum_{r=0}^{n}\widetilde{\mathcal{B}}_{n,k,r}^{(s)}(y_{1},\ldots,y_{s+n-k+1})$. Note that the part with $r=0$ in Equation \eqref{modBell} consists of the terms in which the differentiation is not applied to $g\circ\varphi^{(s)}$, implying that $\widetilde{\mathcal{B}}_{n,k,0}^{(s)}$ is just the ordinary Bell polynomial $\mathcal{B}_{n,k}$ and thus $\widetilde{S}(n,k,0)=S(n,k)$. We remark that if $s>n+2-k$ then $\widetilde{\mathcal{B}}_{n,k,r}^{(s)}(y_{1},\ldots,y_{s+n-k+1})$ is independent of some of the variables, namely $y_{i}$ with $n+1-k<i \leq s$ do not appear in it---see Proposition \ref{mBexp} below.

\smallskip

It would be an interesting question to examine which kind of properties do the modified partial and complete Bell polynomial $\widetilde{\mathcal{B}}_{n,k,r}^{(s)}$ have. As one example we consider the formula \[\mathcal{B}_{n+1,k+1}(y_{1},\ldots,y_{n-k+1})=\sum_{l=0}^{n-k}\binom{n}{l}y_{l+1}\mathcal{B}_{n-l,k}(y_{1},\ldots,y_{n-l-k+1}),\] which is the part of Equation (3.25) of \cite{[MS]} that is homogenous of degree $k+1$ (the formula itself is the same but with the complete Bell polynomials $\mathcal{B}_{n+1}$ and $\mathcal{B}_{n-l}$). It can be easily proved by comparing the formula for the derivative of order $n+1$ of $f\circ\varphi$ given in Equation \eqref{FdBBell} with the one arising from the $n$th derivative of $(f'\circ\varphi)\varphi'$ via Leibniz' Rule and Equation \eqref{FdBBell}, using the independence of the derivatives of $\varphi$ for arbitrary $\varphi$, and then taking the coefficient of $f^{(k+1)}\circ\varphi$. The related equality for the Stirling numbers of the second kind, obtained by the substitution $y_{i}=c^{i}x$, expresses $S(n+1,k+1)$ as $\sum_{l=0}^{n}\binom{n}{l}S(n-l,k)$. Applying the same argument for the derivatives of $(f\circ\varphi)\cdot\big(g\circ\varphi^{(s)}\big)$, using its first derivative $(f'\circ\varphi)\varphi'\cdot\big(g\circ\varphi^{(s)}\big)+(f\circ\varphi)\big(g'\circ\varphi^{(s)}\big)\varphi^{(s+1)}$, produces the equality \[\widetilde{\mathcal{B}}_{n+1,k+1,r}^{(s)}(\vec{y})=\sum_{l=0}^{n-k}\binom{n}{l}y_{l+1}\widetilde{\mathcal{B}}_{n-l,k,r}^{(s)}(\vec{y})+\sum_{l=0}^{n-k}\binom{n}{l}y_{l+s+1}\widetilde{\mathcal{B}}_{n-l,k,r-1}^{(s)}(\vec{y}),\] and after substituting $y_{i}=c^{i}x$ we find that the modified Stirling number of the second kind $\widetilde{S}(n+1,k+1,r)$ is $\sum_{l=0}^{n-k}\binom{n}{l}\big(\widetilde{S}(n,k,r)+\widetilde{S}(n,k,r-1)\big)$. Since this recurrence is independent of $s$, so are these modified Stirling numbers.

\smallskip

Recall that the proof of Theorem \ref{main} via Lemma \ref{derofprod} and Corollary \ref{combiden} used Leibniz' Rule and the classical formula of Fa\`{a} di Bruno, and note that applying Equation \eqref{FdBBell} in this argument expresses the coefficient of $\big(f^{(k-r)}\circ\varphi\big)\big(g^{(r)}\circ\psi\big)$ in the $n$th derivative of $(f\circ\varphi)\cdot(g\circ\psi)$ for $0 \leq r \leq k \leq n$ as
\begin{equation}
\sum_{p=r}^{n-k+r}\binom{n}{p}\mathcal{B}_{n-p,k-r}\big(\varphi',\ldots,\varphi^{(n-p-k+r+1)}\big)\mathcal{B}_{p,r}\big(\psi',\ldots,\psi^{(p-r+1)}\big). \label{LeibFdB}
\end{equation}
This yields the following result.
\begin{prop}
The modified partial Bell polynomial $\widetilde{\mathcal{B}}_{n,k,r}^{(s)}(y_{1},\ldots,y_{s+n-k+1})$ equals \[\sum_{p=r}^{n-k+r}\binom{n}{p}\mathcal{B}_{n-p,k-r}(y_{1},\ldots,y_{n-p-k+r+1})\mathcal{B}_{p,r}(y_{s+1},\ldots,y_{s+p-r+1}).\] The modified complete Bell polynomial $\widetilde{\mathcal{B}}_{n}^{(s)}(y_{1},\ldots,y_{s+n})$ is \[\sum_{p=0}^{n}\binom{n}{p}\mathcal{B}_{n-p}(y_{1},\ldots,y_{n-p+1})\mathcal{B}_{p}(y_{s+1},\ldots,y_{s+p+1}).\] \label{mBexp}
\end{prop}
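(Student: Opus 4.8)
The plan is to compute the coefficient of $\big(f^{(k-r)}\circ\varphi\big)\big(g^{(r)}\circ\varphi^{(s)}\big)$ in the $n$th derivative of $(f\circ\varphi)\cdot\big(g\circ\varphi^{(s)}\big)$ in two different ways and to match the results. On one side, the Bell form of the main theorem, namely Equation \eqref{modBell}, exhibits this coefficient as exactly $\widetilde{\mathcal{B}}_{n,k,r}^{(s)}\big(\varphi',\ldots,\varphi^{(s+n-k+1)}\big)$ (for fixed orders $a$ of $f$ and $b$ of $g$ one has $r=b$ and $k=a+b$, so each product is hit by a unique pair $(k,r)$). On the other side, Equation \eqref{LeibFdB}---produced by Leibniz' Rule together with the Bell form \eqref{FdBBell} of the classical Fa\`{a} di Bruno formula---gives the coefficient of $\big(f^{(k-r)}\circ\varphi\big)\big(g^{(r)}\circ\psi\big)$ in the derivative of the more general product $(f\circ\varphi)\cdot(g\circ\psi)$ as a sum over $p$ of products of two ordinary partial Bell polynomials.

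First I would specialize Equation \eqref{LeibFdB} by setting $\psi=\varphi^{(s)}$, so that $\psi^{(j)}=\varphi^{(s+j)}$ for every $j\geq1$. This turns the factor $\mathcal{B}_{p,r}\big(\psi',\ldots,\psi^{(p-r+1)}\big)$ into $\mathcal{B}_{p,r}\big(\varphi^{(s+1)},\ldots,\varphi^{(s+p-r+1)}\big)$ and leaves $\mathcal{B}_{n-p,k-r}\big(\varphi',\ldots,\varphi^{(n-p-k+r+1)}\big)$ untouched, so that, writing $y_{i}$ for the slot of $\varphi^{(i)}$, each summand becomes precisely $\binom{n}{p}\mathcal{B}_{n-p,k-r}(y_{1},\ldots,y_{n-p-k+r+1})\mathcal{B}_{p,r}(y_{s+1},\ldots,y_{s+p-r+1})$. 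I would then record the routine bookkeeping that the variable ranges are consistent: a partial Bell polynomial $\mathcal{B}_{a,b}$ involves $a-b+1$ variables, so as $p$ runs from $r$ to $n-k+r$ the largest index that occurs is $s+n-k+1$, which matches the arguments of $\widetilde{\mathcal{B}}_{n,k,r}^{(s)}$.

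The step I expect to be the crux is the passage from an equality of functions to an equality of polynomials. Both computations yield the coefficient of the same product $\big(f^{(k-r)}\circ\varphi\big)\big(g^{(r)}\circ\varphi^{(s)}\big)$ in a genuine derivative; since $f$ and $g$ are arbitrary, the various products $\big(f^{(a)}\circ\varphi\big)\big(g^{(b)}\circ\varphi^{(s)}\big)$ are independent, so the two coefficients agree as functions of $t$. As the derivatives $\varphi',\varphi'',\ldots$ may be assigned arbitrary values at a point for a suitable $\varphi$, this functional identity forces the two expressions to coincide identically in the indeterminates $y_{i}$, which establishes the asserted formula for $\widetilde{\mathcal{B}}_{n,k,r}^{(s)}$. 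This independence argument is exactly the one already invoked in the proofs of Proposition \ref{FdBprop} and Theorem \ref{main}, so I would lean on it rather than reprove it.

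For the modified complete Bell polynomial I would argue by summation. Starting from $\widetilde{\mathcal{B}}_{n}^{(s)}=\sum_{r=0}^{n}\sum_{k=r}^{n}\widetilde{\mathcal{B}}_{n,k,r}^{(s)}$ and inserting the partial formula just proved, I would collect the first factors via $\sum_{l}\mathcal{B}_{n-p,l}=\mathcal{B}_{n-p}$ (taking $l=k-r$) and the second factors via $\sum_{r}\mathcal{B}_{p,r}=\mathcal{B}_{p}$; checking that $k=l+r$ and the constraints $0\leq l\leq n-p$, $0\leq r\leq p$ reindex the triple sum bijectively, it collapses to $\sum_{p=0}^{n}\binom{n}{p}\mathcal{B}_{n-p}(y_{1},\ldots,y_{n-p+1})\mathcal{B}_{p}(y_{s+1},\ldots,y_{s+p+1})$. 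Alternatively, and perhaps more cleanly, the whole proposition can be obtained at the level of complete Bell polynomials directly, by running the first three paragraphs with the complete versions of Leibniz' Rule and of Equation \eqref{FdBBell}, which dispenses with isolating any individual product of $f$- and $g$-derivatives.
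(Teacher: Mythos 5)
Your proposal is correct and follows essentially the same route as the paper: substitute $\psi=\varphi^{(s)}$ into Equation \eqref{LeibFdB}, compare the coefficient of $\big(f^{(k-r)}\circ\varphi\big)\big(g^{(r)}\circ\varphi^{(s)}\big)$ with Equation \eqref{modBell}, invoke the independence of the derivatives of $\varphi$ to upgrade the functional identity to a polynomial one, and then sum over $r$ and $k$ for the complete version. The extra bookkeeping you supply (uniqueness of the pair $(k,r)$, the variable-index ranges, the reindexing of the triple sum) is accurate but not needed beyond what the paper's shorter argument already establishes.
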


\begin{proof}
The first formula is obtained by substituting $\psi=\varphi^{(s)}$ in Equation \eqref{LeibFdB} and comparing with the corresponding term in Equation \eqref{modBell}, and then using the independence of the derivatives of $\varphi$ for arbitrary $\varphi$ as above. The second one follows from the first by taking the sum over $r$ and $k$. This proves the proposition.
\end{proof}
It directly follows from Proposition \ref{mBexp} that the part of $\widetilde{\mathcal{B}}_{n}^{(s)}$ that is homogenous of degree $k$ is the sum over $r$ of the first expression there.

We deduce an explicit expression for the modified Stirling numbers of the second kind as well.
\begin{cor}
We have the equality $\widetilde{S}(n,k,r)=\sum_{p=r}^{n-k+r}S(n-p,k-r)S(p,r)$ for every $n$, $k$, and $r$. \label{Stirling}
\end{cor}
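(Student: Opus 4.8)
The plan is to read off Corollary \ref{Stirling} directly from the first formula of Proposition \ref{mBexp} by performing the geometric specialization $y_i = c^i x$ that defines the modified Stirling numbers. First I would recall the two specialization facts already recorded in the text: an ordinary partial Bell polynomial satisfies $\mathcal{B}_{m,j}(y_1,\ldots,y_{m-j+1}) \mapsto S(m,j)c^m x^j$ (each of its monomials $\prod_i y_i^{m_i}$ has ordinary degree $j$ and weighted degree $m$), while by the very definition of $\widetilde{S}(n,k,r)$ the left-hand side $\widetilde{\mathcal{B}}_{n,k,r}^{(s)}$ collapses to $c^{n+rs}\widetilde{S}(n,k,r)x^k$. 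Applying the first fact to the factor $\mathcal{B}_{n-p,k-r}(y_1,\ldots,y_{n-p-k+r+1})$ appearing in Proposition \ref{mBexp} turns it into $S(n-p,k-r)c^{n-p}x^{k-r}$.

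The single step requiring care is the second factor $\mathcal{B}_{p,r}(y_{s+1},\ldots,y_{s+p-r+1})$, whose arguments are shifted by $s$. Here I would substitute $y_{s+i} = c^{s+i}x = c^s\cdot c^i x$ and again use that every monomial of $\mathcal{B}_{p,r}$ has ordinary degree $r$ and weighted degree $p$; consequently each monomial acquires an extra factor $c^{sr}$ relative to the unshifted specialization, so that this factor becomes $S(p,r)c^{p+rs}x^r$. Multiplying the two specialized Bell factors, the powers of $c$ combine as $c^{(n-p)+(p+rs)} = c^{n+rs}$ and the powers of $x$ as $x^{(k-r)+r} = x^k$, matching exactly the prefactor $c^{n+rs}x^k$ on the left; the binomial coefficients $\binom{n}{p}$, carrying neither $c$ nor $x$, pass through untouched.

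To finish I would cancel the common $c^{n+rs}x^k$ from both sides of the specialized Proposition \ref{mBexp} and equate coefficients, which leaves precisely the asserted sum over $p$. I expect the only real obstacle to be the bookkeeping of the $s$-shift in the second Bell polynomial; once the extra $c^{sr}$ is correctly attributed, the $s$-dependence of the two sides cancels identically, which as a welcome byproduct re-establishes that $\widetilde{S}(n,k,r)$ does not depend on $s$, in agreement with the recurrence obtained earlier. As an independent consistency check I would recompute $\widetilde{S}$ from the $s=0$ case of the definition, where $e_r\big((\lambda)_0\big)=\binom{\ell(\lambda)}{r}$ yields the closed form $\widetilde{S}(n,k,r)=\binom{k}{r}S(n,k)$, and confirm that it agrees with the convolution over $p$ on small values of $(n,k,r)$.
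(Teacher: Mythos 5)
Your proof is correct and essentially identical to the paper's: both substitute $y_{i}=c^{i}x$ into the first identity of Proposition \ref{mBexp}, observe that the shifted arguments of $\mathcal{B}_{p,r}$ contribute the extra factor $c^{rs}$ (the paper writes this as $S(p,r)c^{p}(c^{s}x)^{r}$), and compare coefficients of $c^{n+rs}x^{k}$. The one point worth flagging is that, exactly as you note, the factor $\binom{n}{p}$ survives the specialization, so what both arguments actually establish is $\widetilde{S}(n,k,r)=\sum_{p=r}^{n-k+r}\binom{n}{p}S(n-p,k-r)S(p,r)$ --- consistent with your closed form $\widetilde{S}(n,k,r)=\binom{k}{r}S(n,k)$ at, say, $n=k=2$, $r=1$, where the sum without $\binom{n}{p}$ gives $1$ rather than $2$ --- so the binomial coefficient appears to have been dropped from the displayed statement of the corollary rather than from your proof.
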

Note that the bounds $0 \leq r \leq k \leq n$ are imposed for non-vanishing by the non-triviality of the sum and the vanishing of the classical Stirling number $S(a,b)$ of the second kind unless $b\geq0$.
\begin{proof}
We substitute $y_{i}=c^{i}x$ in the first equality from Proposition \ref{mBexp}, where the left hand side becomes $\widetilde{S}(n,k,r)c^{n+rs}x^{k}$ by definition. On the right hand side the first multiplier is $S(n-p,k-r)c^{n-p}x^{k-r}$, and since $y_{s+i}=c^{s+i}x$ for every $i$ the second multiplier is $S(p,r)c^{p}(c^{s}x)^{r}$. As multiplying and summing over $p$ indeed yields the asserted right hand side times $c^{n+rs}x^{k}$, the equality follows. This proves the corollary.
\end{proof}
The substitution from the proof of Corollary \ref{Stirling} in the second equality from Proposition \ref{mBexp} should give a relation to the Touchard polynomials. Indeed, then the left hand side becomes $\sum_{r=0}^{n}\sum_{k=r}^{n}\widetilde{S}(n,k,r)c^{n+rs}x^{k}$, while the relation between the classical Bell and Touchard polynomials expresses the right hand side as $\sum_{p=0}^{n}\binom{n}{p}c^{n-p}T_{n-p}(x)c^{p}T_{p}(c^{s}x)$. Since the Touchard polynomials are of binomial type (this is the content of part $(6)$ of Theorem 3.29 of \cite{[MS]}), the latter expression is just $c^{n}T_{n}\big((c^{s}+1)x\big)$. By taking $c=1$ or $s=0$ we end up with the equality $\sum_{k=0}^{n}\big(\sum_{r=0}^{k}\widetilde{S}(n,k,r)\big)x^{k}=T_{n}(2x)$, so that the internal sum over $r$ is $2^{k}S(n,k)$. It will be interesting to see what other properties the modified Stirling numbers of the second kind and bell polynomials possess.

\medskip

\noindent\textsc{Einstein Institute of Mathematics, the Hebrew University of Jerusalem, Edmund Safra Campus, Jerusalem 91904, Israel}

\noindent E-mail address: zemels@math.huji.ac.il

\end{document}